%% file: draft.tex
\documentclass[onefignum]{siamart250211}

\graphicspath{ {./images/} }
\usepackage{subcaption}
\usepackage{url}
\usepackage{verbatim}
\usepackage{braket, amsfonts, amssymb, amsopn, amscd,stmaryrd,xfrac}
\usepackage{pifont}
\usepackage{longtable}
\usepackage{multirow}
\usepackage{todonotes}

\usepackage{tikz}
\usepackage{pgfplots}
\usepackage{enumitem}
\usepackage{calc}
\usepackage{placeins}

\newsiamremark{assumption}{Assumption}
\crefname{assumption}{Assumption}{Assumptions}
\newsiamremark{remark}{Remark}
\crefname{remark}{Remark}{Remarks}

\pgfplotsset{
    width=\linewidth,
    compat=1.18, 
    legend style={
        at={(0,0)},
        anchor=north,
        legend columns=1,
        cells={anchor=west},
        font=\footnotesize,
        rounded corners=2pt,
    },
    every axis/.append style={font=\small},
}

\usetikzlibrary{patterns,shapes.arrows}
\usetikzlibrary{backgrounds}
\usetikzlibrary{fit}

\DeclareMathOperator{\HS}{HS}

\DeclareMathOperator{\dom}{dom}
\DeclareMathOperator{\Cov}{\mathbf{Cov}}

\DeclareMathOperator*{\argmin}{arg\,min}
\DeclareMathOperator{\diag}{diag}
\DeclareMathOperator{\M}{\mathbb{M}}

\DeclareMathOperator{\loc}{loc}
\DeclareMathOperator{\obs}{obs}

\newcommand*\diff{\mathop{}\!\mathrm{d}}

\newcommand*\RR{\mathbb{R}}

\newcommand{\Cs}{C_{\mathrm{s}}}
\newcommand{\sourcedom}{D}
\newcommand{\SLP}{\mathtt{P}}
\newcommand{\SLO}{S}
\newcommand{\DLO}{D}
\newcommand{\COperator}{\mathcal{C}}
\newcommand{\COp}{\COperator}
\newcommand{\Cq}[1][q]{\COperator_{#1}}
\newcommand{\GreenFunction}[1][]{G#1}

\newcommand{\etaaskappa}{\kappa}

\title{Hölder-Logarithmic Stability and Convergence Rates for an Inverse Random Source Problem\thanks{Submitted to the editors January 30th 2026.}
\funding{P. Mickan was funded by the Deutsche Forschungsgemeinschaft (DFG, German Research Foundation) through RTG 2088. T. Hohage acknowledges support by DFG through CRC 1456 (grant 432680300, project C04).}}
\author{Philipp Ronald Mickan\thanks{Institut f\"ur Numerische und Angewandte Mathematik, Universit\"at G\"ottingen, 37083, Germany (\email{p.mickan@math.uni-goettingen.de}).
}
\and Thorsten Hohage\thanks{Institut f\"ur Numerische und Angewandte Mathematik, Universit\"at G\"ottingen, 37083, Germany and Max-Planck-Institut f\"ur Sonnensystemforschung, G\"ottingen, 37077, Germany (\email{hohage@math.uni-goettingen.de}}}

\headers{Stability Estimates for Random Source Problem}{Philipp R. Mickan, and Thorsten Hohage}

\begin{document}
\maketitle

\input{00_abstract}

\input{01_introduction}

\input{02_MainResults}

\input{03_VSC}

\input{04_Proof}

\input{05_NumericalParts}

\input{06_Conclusion}

\end{document}

%% file: 00_abstract.tex
\begin{abstract}
In this paper, we investigate an inverse random source problem involving the recovery of the strength of a random, uncorrelated acoustic source from correlation measurements of the emitted time-harmonic acoustic waves. 
Such problems arise in applications including aeroacoustics and seismic imaging. 
Unlike their deterministic counterparts, inverse random source problems are known to be uniquely solvable in the absence of noise. 
Nevertheless, due to their inherent ill-posedness, regularization is required to stably reconstruct the source strength.

We derive conditional H\"older-logarithmic stability estimates under Sobolev smoothness assumptions by employing complex geometrical optics solutions. 
Moreover, by establishing a variational source condition, we obtain H\"older-logarithmic convergence rates for spectral regularization methods. 
At a fixed frequency, the exponents in the logarithmic stability and convergence estimates grow without bound as the Sobolev regularity of the source increases.
Finally, we present numerical experiments supporting our theoretical findings.

\end{abstract}

\begin{keywords}
inverse random source problem, conditional stability, convergence rates, inverse problems
\end{keywords}

\begin{MSCcodes}
78A46, 65J20, 65J22
\end{MSCcodes}

%% file: 01_introduction.tex
\section{Introduction}
\label{sec:Introduction}
Inverse source problems consist of determining the right-hand side of a differential equation from (usually incomplete) measurements of its solution.
Sources produced by uncontrolled processes, such as turbulence, are naturally and advantageously modeled as random processes.
Such models are used in numerous applications, including
passive seismic imaging \cite{Artman:06,Bleistein2001}, 
helioseismic holography \cite{Agaltsov_2020, lindsey2000basic, mullerQuantitativePassiveImaging2024},
noise localization in aeroacoustics \cite{Hohage2020,Kaltenbacher2018},  
and ocean tomography \cite{BSS:08}.
Inverse problems for virtual random sources also appear in the context of imaging in random media \cite{garnierPassiveImagingAmbient2016}. 
The waves excited by the unknown source and measured to extract information about it are realizations of a random solution process of a partial differential equation.  
The stochastic characteristics of these solution processes constitute the idealized, noise-free data of the inverse problem. 

Our work is specifically motivated by noise localization in aeroacoustics and helioseismic holography. Here, the idealized noise-free data are 
the covariance operators of solution processes, whereas the available noisy input data are statistical estimators, namely, sample covariances computed from the measured wave fields. The quantity to be recovered in the inverse problem is the pointwise covariance of the source, which is modeled as an uncorrelated process.

In such settings, uniqueness results can be established \cite{Devaney1979,Hohage2020} even for single-frequency boundary data, which is not possible for deterministic inverse source problems. The same references also show
that without any assumptions on the covariance of a centered Gaussian random source process, the inverse problem of recovering
its covariance structure is not unique. 
The aim of the present paper is to extend the aforementioned uniqueness results to conditional stability estimates 
and convergence rates for a random inverse source problem. 

We first discuss some related work. In \cite{PLi2017a,YZhao2019},
stability is established for the one-dimensional multi-frequency inverse random source problem governed by the Helmholtz equation 
using an explicit inversion formula for the Fourier transform of the source strength. 
Their H\"older-logarithmic stability estimate depends on a low-frequency average of boundary data comprising differences in variances and means and exploits the improved stability of the multi-frequency approach presented in \cite{Cheng2016}. 
These works assume that the random sources are real-valued, whereas in the applications motivating our work, the sources must be modeled by 
complex-valued stochastic processes. 

A different type of result was established in \cite{Lassas2008} for waves scattered by a random potential modeled by a Gaussian random process whose covariance operator is a pseudodifferential operator: the authors showed that the intensities of a solution to the differential equation for a single realization of the random process, averaged over all frequencies, uniquely determine the local strength of the potential. 
Similar results for a random source instead of a random potential were established in \cite{JLi2020,PLi2021c}. 
Furthermore, in recent years, considerable progress has been made in establishing uniqueness and stability results for 
different types of time-domain passive imaging problems, see, e.g., \cite{feizmohammadi:25,HELIN2018132,NHZ:20,TLSK:24}.

The work most closely related to the present paper is \cite{li2023stability}, which establishes logarithmic stability estimates in the frequency domain. 
We improve these results in several important ways. Most importantly, the constants in 
the logarithmic stability estimates in \cite{li2023stability} deteriorate as the  
wave number tends to infinity, whereas our bounds yield a H\"older rate if the
wave number grows sufficiently rapidly as the noise level tends to $0$
(see Remark \ref{rem:Hoelder_logarithmic} for details). Moreover, we establish not only a stability estimate, i.e., an estimate of differences between elements in 
the range of the forward operator, but also convergence rates of spectral regularization 
methods applied to noisy data. Furthermore, we reduce the minimal Sobolev regularity index 
$s$ of the source strength from $s\geq 3$ in \cite{li2023stability} to  
$s>0$, and the asymptotic behavior of the exponent in logarithmic stability estimates 
for monochromatic data is improved from $s/3$ to $s$. 
Reference \cite{li2023stability} also contains a H\"older-type bound, but, as detailed at the
end of \Cref{sec:MainResults}, it is not a classical stability estimate; such an estimate is impossible in this setting. 

\medskip
The remainder of this paper is organized as follows. We state the main results in the next section. 
\Cref{sec:VSC} introduces the method used to verify variational source conditions. 
Next, \Cref{sec:ProofMainProposition} introduces complex geometrical optics solutions, which are used to bound the Fourier coefficients of the source strength in terms of the covariance data. These bounds provide the tools needed to prove the main results, \cref{thm:StabilityEstimate,thm:ConvergenceRate}, via variational source conditions. 
In \Cref{sec:Numerical}, we report numerical experiments supporting the convergence rates established in \Cref{thm:ConvergenceRate}. 
Finally, we present our conclusions.

%% file: 02_MainResults.tex
\section{Setting and main results}\label{sec:MainResults}
\paragraph*{Problem formulation}
Most of this paper deals with the random inverse source problem for the Helmholtz equation
\begin{equation}
    \label{eq:Helmholtz}
    \Delta u + \kappa^2u=-Q \text{ in } \mathbb{R}^3
\end{equation}
with a random source $Q$ in a bounded domain $\sourcedom\subset\mathbb{R}^3$. 

We describe $Q$ as a Hilbert space process on $L^2(\sourcedom)$. Recall that, for an underlying probability space $(\Omega,\Sigma,\mathbf{P})$ and a separable Hilbert space $X$, a Hilbert space process $Q$ is a bounded linear mapping from 
$X$ to $L^2(\Omega,\mathbf{P})$. The image of a test function $f\in X$ under $Q$ will be denoted by $\langle Q,f\rangle$, and it is a complex-valued random variable with finite second moments. The expectation $\mathbf{E}[Q]\in X$ and the covariance operator $\mathbf{Cov}[Q]\in L(X)$ are uniquely characterized by 
\[
\langle \mathbf{E}[Q],f\rangle = \operatorname{E}[\langle Q,f\rangle],\qquad 
\langle \mathbf{Cov}[Q]f,g\rangle = \operatorname{Cov}(\langle Q,f\rangle,\langle Q,g\rangle)
\]
for all $f,g\in X$, where $\operatorname{E}$ and $\operatorname{Cov}$ denote the standard expectation and 
covariance of complex-valued random variables. $Q$ is called a \emph{white noise process} if $\mathbf{Cov}[Q]=\sigma^2I$ for some $\sigma>0$, 
and it is called \emph{Gaussian} if $\langle Q,f_1\rangle,\dots,\langle Q,f_n\rangle$ is jointly Gaussian 
for every finite collection $f_1,\dots,f_n\in X$. Given an orthonormal basis $\{f_j:j\in\mathbb{N}\}$ of $X$, 
white noise with $\sigma=1$ can be defined by $\langle Q,f\rangle:=\sum_{j=1}^{\infty} \xi_j\langle f,f_j\rangle$ 
or $Q=\sum_{j=1}^{\infty}\xi_jf_j$ 
with independent scalar complex Gaussian random variables $\xi_j\sim \mathcal{CN}(0,1)$. This definition can 
be shown to be independent of the choice of $\{f_j\}$. White noise is not a random variable in $X$ because its norm 
is infinite almost surely. However, white noise on $L^2(\sourcedom)$ can be shown to be a random variable 
in Sobolev spaces $H^{-s}_0(D)$ for $s>d/2$ (see \cite[Thm. 4.4.3]{GN:15} or \cite[\S 2.2]{HL:25}). For an operator $T\in L(X,Y)$ mapping to another 
separable Hilbert space $Y$, the relation $\langle TQ,g\rangle:=\langle Q,T^*g\rangle$, $g\in Y$, defines a Hilbert space process in $Y$
with 
\begin{align}\label{eq:CovTQ}
\mathbf{Cov}[TQ]=T\mathbf{Cov}[Q]T^*. 
\end{align}
This allows us to define Hilbert space processes with arbitrary bounded positive semidefinite covariance operators by using the square root in the sense of the functional calculus.

We make the following assumptions on the random source \(Q\).
\begin{assumption}[source process]
\label{ass:SourceAssumptions}
Let $\sourcedom$ be an open and bounded domain in $\mathbb{R}^3$ such that $\mathbf{0}\in \sourcedom \subset B(r) \subset \RR^3$. Assume that $Q$ is an \emph{uncorrelated centered Gaussian random process} on $L^2(\sourcedom)$, referred to as the \emph{random source}. That is, we assume that $Q$ has zero mean, $\mathbb{E}[Q]\equiv0\in L^2(\sourcedom)$, and that there exists a function $q\in L^\infty(\sourcedom,[0,\infty))$, called the \emph{source power function}, such that the covariance operator $\Cov[Q]\in\mathcal{L}(L^2(\sourcedom))$ is the multiplication operator $M_q\colon L^2(\sourcedom)\to L^2(\sourcedom)$, 
\begin{equation}
    \label{eq:CovarianceOperator}
    (\Cov[Q]v)(y)=(M_qv)(y)=q(y)v(y)
\end{equation}
for all $y\in\sourcedom$ and $v\in L^2(\sourcedom)$.
\end{assumption}
In the following $\sourcedom$ will be referred to as the source region. 

In contrast to the setting in \cite{Hohage2020}, we assume the measurement region $\M$ to be the boundary of a bounded domain $\mathcal{O}\subset \mathbb{R}^3$:
\begin{assumption}[measurement surface]
  \label{ass:Measurement}
  Let $\M$ be the $C^{2,\alpha}$-smooth boundary of a bounded domain $\mathcal{O}$ that is 
  star-shaped with respect to the origin. Define 
  $R:=\sup\{\Vert x\Vert :x\in\M\} <\infty$, and assume that
  $\inf\{\Vert x\Vert :x\in\M\}>r$
  and $\delta_{\M} := \inf_{x\in \M} (x\cdot \nu)>0$,
  where $\nu$ denotes the exterior unit normal vector on $\M$.
%   \todo[inline]{Die folgende Annahme ist jetzt in der $\inf$-Bedingung enthalten:  
%   $\overline{B(\Tilde{r})}\subset \mathcal{O}$ for some $\Tilde{r}>r$. Entweder das Infimum 
%   $\Tilde{r}$ nennen oder $\Tilde{r}$ einführen, wo es gebraucht wird.}
%   \todo[inline]{This follows from our new assumptions, so we should prove it where needed 
%   instead of assuming it. 
%   Since $\delta_{\M}>0$, we can parameterize $\M$ by a radial function on the unit sphere, 
%   the gradient of this function must be bounded as continuous function on a compact set, 
%   and the surface area can be expressed in terms of this gradient.}
%   Moreover, suppose that $\M$ has a finite surface area. 
  %and  that $\kappa^2$ is not a Dirichlet eigenvalue of the negative Laplacian $-\Delta$ in $\mathcal{O}$.
\end{assumption}

The data from which we seek to recover the random source are given by the sample covariance 
\begin{equation*}
    \Cov_N(x,y) := \frac{1}{N}\sum_{l=1}^N u_l(x)\overline{u_l(y)} \text{ for } x,y \in \M
\end{equation*}
of $N$ realizations $u_l$ of $u|_{\M}$, where $u$ is the random solution process describing time-harmonic sound propagation in a homogeneous background medium and satisfying the Helmholtz equation \eqref{eq:Helmholtz} driven by the random source term $Q$. In the continuous model, this sample covariance approximates the covariance operator of the random process $u|_{\M}$ on $L^2(\M)$ \cite{Hohage2020}.

With the well-known fundamental solution to the Helmholtz equation \eqref{eq:Helmholtz}, 
\begin{equation}
    \label{eq:GreensFunction}
    \GreenFunction[(x,y)]:=\frac{\exp(i\kappa|x-y|)}{4\pi|x-y|}
\end{equation}
(cf.~\cite[Section 2.2.]{ColtonKressIAaES}), 
the solution process of \eqref{eq:Helmholtz} is described by
\begin{equation}
    \label{eq:SolutionAsIntegral}
    u(x)=\int_\sourcedom\GreenFunction[(x,z)]Q(z)\diff z,\qquad x\in\mathbb{R}^3.
\end{equation}
We refer to \cite{HL:25} for regularity conditions on $q$ under which \eqref{eq:SolutionAsIntegral} almost surely defines a distributional solution 
to \eqref{eq:Helmholtz} in $D$. 
By Assumption \ref{ass:Measurement}, the time-harmonic pressure signal can be expressed as the image under the \emph{volume-potential operator} $\mathcal{G}\colon L^2(\sourcedom)\to L^2(\M)$ defined by
\begin{equation}
\label{eq:VolumePotentialOperator}
(\mathcal{G}\psi)(x) := \int_\sourcedom \GreenFunction[(x,z)]\psi(z)\diff z \quad \text{ for } x\in \M.
\end{equation}
By Assumption \ref{ass:Measurement}, the source and measurement regions are separated by a positive distance. Because $\GreenFunction$ is infinitely smooth 
off the diagonal, $u|_{\M}=\mathcal{G}(Q)$ is infinitely smooth.
By \eqref{eq:CovTQ}, the covariance operator of $u|_{\M}$ is given by 
\[
\mathbf{Cov}[u|_{\M}] = \mathcal{G} M_q\mathcal{G}^\ast.
\]
It is not difficult to see (see \cite[Lemma A.6]{HL:25}) that $\mathbf{Cov}[u|_{\M}]$ can be written as an integral operator  
with a square-integrable kernel given by the covariance function
\begin{equation*}
   c_q(x,y):= \mathtt{Cov}(u(x),u(y))= \int_\sourcedom \GreenFunction[(x,z)] q(z) \overline{\GreenFunction[(y,z)]} \diff z 
   \quad  \text{ for } x,y\in\M.
\end{equation*}
Note that the covariance operator $\mathbf{Cov}[u|_{\M}]$ 
only depends on the deterministic source strength $q$ of the random source process $Q$. Thus, the forward operator of our inverse problem  
is deterministic and given by 
\begin{align}
\label{eq:ForwardOp}
\begin{aligned}
 \mathcal{C}\colon L^\infty(\sourcedom,[0,\infty))&\to \HS(L^2(\M)) \\
 q&\mapsto \mathcal{G}M_q\mathcal{G}^\ast .
\end{aligned}
\end{align}
Here $\HS(L^2(\M))$ denotes the space of Hilbert--Schmidt operators in $L^2(\M)$, which is 
the space of compact linear operators $A\colon L^2(\M)\to L^2(\M)$ with square summable 
singular values, equipped with either the $\ell^2$-norm of the sequence of singular values 
or the $L^2(\M\times \M)$-norm of the integral kernel $c_q$:
\[
\Vert \mathcal{C}(q)\Vert_{\HS(L^2(\M))}=\Vert c_q\Vert_{L^2(\M\times \M)}
\] 
(cf. \cite[Theorem VI.23]{ReedSimon1980}). 
This implies that the forward operator $\mathcal{C}$ has a unique continuous extension to a bounded linear 
operator $\mathcal{C}:L^2(\sourcedom)\to \HS(L^2(\M))$ with norm bounded by 
\[
\|\mathcal{C}\|_{L^2(\sourcedom)\to \HS(L^2(\M))} \leq \|(x,y,z)\mapsto
\GreenFunction[(x,z)] \overline{\GreenFunction[(y,z)]}\|_{L^2(\M\times\M\times\sourcedom)}. 
\]
The right-hand side is finite because the kernel is infinitely smooth and $\sourcedom$ and $\M$ are bounded.

\paragraph*{Stability estimates and convergence rates}
A function $\phi\colon[0,\infty) \to [0,\infty)$ is called an index function if it is continuous and monotonically increasing, with $\phi(0) = 0$. 
Let $X$ and $Y$ be Hilbert spaces, let
$F\colon D(F)\subset X\to Y$ be a forward operator, and let $U\subset D(F)$ be a set of candidate solutions satisfying some prior information. An inequality of the form
\begin{equation}
    \label{eq:DefinitionConditionalStability}
    \Vert f_1-f_2\Vert_X\leq \phi\big(\Vert F(f_1)-F(f_2)\Vert_Y\big) \text{ for all } f_1,f_2\in U 
\end{equation}
is called a \emph{conditional stability estimate with index function 
$\phi$ for the set $U$}. Such stability estimates characterize the
modulus of continuity of $(F|_U)^{-1}$ (or generalized inverses) and thus provide insight into the degree of ill-posedness of the inverse problem under the prior information
that the true solution belongs to $U$.

\begin{theorem}
\label{thm:StabilityEstimate}
Suppose Assumptions \ref{ass:SourceAssumptions} and \ref{ass:Measurement} are satisfied. 
Let $m\geq 0$ and $s,\Cs>0$ with $m<s$. Suppose also that $\kappa\geq 1/R$, with $R$ defined in 
Assumption \ref{ass:Measurement}. 
Moreover, let $Y=\HS(L^2(\M))$ and \(X=H^m(\sourcedom)\). Then there exists a constant $C>0$, depending on $m,s,\Cs,\kappa$, and $\M$, such that, for all $q_1,q_2\in \dom(\mathcal{C})$ satisfying $\Vert q_j\Vert_{H^s}\leq \Cs$ for $j=1,2$, a \emph{logarithmic conditional stability estimate} with index function
\begin{align}
\label{eq:ExplicitStabilityEstimate}
\phi(\delta)=C\log\big(3+\delta^{-2}\big)^{-(s-m)}
\end{align}
holds. More precisely, we have a \emph{H\"older-logarithmic conditional stability estimate} with index function
\begin{align}\label{eq:Hoelderlog_stability}
\begin{aligned}
&\phi_{\kappa}(\delta)^2:=
C'\overline{\rho}(\delta,\kappa)^{2+\tau}\!(3+\delta^{-2})^{\frac{1}{3}}\delta + \frac{16}{3}C_s^2 
\overline{\rho}(\delta,\kappa)^{-2(s-m)},\\
&\overline{\rho}(\delta,\kappa):=
\frac{\log(3+\delta^{-2})}{3R}\sqrt{1+\frac{(6R\kappa)^2}{\log(3+\delta^{-2})}},\quad\tau:=\max(2m+\frac{3}{2}-s,0),
\end{aligned}
\end{align}
where $C'$ depends on $m,s$, and $\M$, but not on $\kappa$ 
(see Remark \ref{rem:Hoelder_logarithmic}).
\end{theorem}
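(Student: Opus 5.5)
Set $f:=q_1-q_2$ and extend it by zero outside $\sourcedom$, so that $\|f\|_{H^s}\le 2\Cs$. By linearity of $\mathcal{C}$ we have $\delta:=\|\mathcal{C}(q_1)-\mathcal{C}(q_2)\|_{\HS}=\|c_f\|_{L^2(\M\times\M)}$. The plan is to bound the Fourier transform $\hat f(\xi)$ for low frequencies $|\xi|\le\rho$ by $\delta$ times an exponentially growing factor, and to control the high frequencies by the Sobolev prior. For the high frequencies,
\[
\int_{|\xi|>\rho}(1+|\xi|^2)^m|\hat f|^2\diff\xi\le \rho^{-2(s-m)}\|f\|_{H^s}^2\le 4\Cs^2\rho^{-2(s-m)},
\]
which, up to the extension constant, gives the term $\tfrac{16}{3}\Cs^2\overline\rho^{-2(s-m)}$.

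For the low frequencies I use complex geometrical optics solutions. Take $\zeta_1,\zeta_2\in\mathbb{C}^3$ with $\zeta_j\cdot\zeta_j=\kappa^2$ and $\zeta_1-\overline{\zeta_2}=\xi$, so that $v_j(z)=e^{i\zeta_j\cdot z}$ solves the Helmholtz equation. For any Helmholtz solution $v$ in $\mathcal{O}$, Green's representation writes $v$ on $\sourcedom$ as a combination of single- and double-layer potentials of its Cauchy data on $\M$, and hence as $\mathcal{G}^*$ (or its conjugate) applied to boundary densities. This yields the identity
\[
\int_\sourcedom f\,v_1\overline{v_2}\diff z=\langle \mathcal{C}(f)g_1,g_2\rangle
\]
with $g_j$ expressed through the Cauchy data of $v_j$ on $\M$. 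An alternative route is to apply the Helmholtz operator in $x$ and $y$ to $c_f$ and integrate by parts twice over $\mathcal{O}$, which gives $\int f v_1\overline{v_2}=\int_{\M}\int_{\M}(\text{Cauchy data of }c_f,v_1,v_2)$. Since $c_f$ is smooth on $\M\times\M$, its normal derivatives can be bounded through the Helmholtz equation, interior regularity and the separation of $\M$ from $\sourcedom$; alternatively one can use only $c_f$ together with a Dirichlet-to-Neumann-type operator. With $|\operatorname{Im}\zeta_j|\lesssim\sqrt{|\xi|^2/4+\kappa^2}$ and $|z|\le R$ on $\M$, the densities satisfy $\|g_j\|\lesssim (1+|\zeta_j|)^{a}e^{R|\operatorname{Im}\zeta_j|}$. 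This gives
\[
|\hat f(\xi)|\le C(1+|\xi|+\kappa)^{b}e^{2R\sqrt{|\xi|^2/4+\kappa^2}}\,\delta .
\]
Here the star-shapedness and $\delta_\M>0$ provide a uniform parametrization of $\M$ and a bound on its surface area.

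Integrating over $|\xi|\le\rho$ gives $\rho^{3+2m+2b}e^{2R\sqrt{\rho^2+4\kappa^2}}\delta^2$. Next choose $\overline\rho$ so that $e^{2R\sqrt{\overline\rho^2+4\kappa^2}}\approx(3+\delta^{-2})^{1/3}$, which makes the exponential factor $(3+\delta^{-2})^{1/3}\delta\cdot\delta$. Working this out yields exactly the $\overline\rho$ in \eqref{eq:Hoelderlog_stability}, with the polynomial powers collected into $\overline\rho^{2+\tau}$; the dependence on $\tau$ comes from optimizing the polynomial weights, possibly by interpolating against $H^s$. Summing the two contributions gives $\phi_\kappa^2$. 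For fixed $\kappa$ we have $\overline\rho\sim\log(3+\delta^{-2})$, and the first term is $O(\delta^{2/3}\log^{2+\tau})$. This is dominated by the second term, which gives \eqref{eq:ExplicitStabilityEstimate}; for $\delta$ bounded below, the trivial bound $\|f\|_{H^m}\le 2\Cs$ suffices.

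The main obstacle is the second step: making the representation $\int f v_1\overline{v_2}=\langle\mathcal{C}(f)g_1,g_2\rangle$ rigorous with explicit bounds. The difficulty is that $\mathcal{G}^*$ involves only single-layer-type traces and is not surjective onto Helmholtz solutions. I would handle the double-layer part by using the representation via Cauchy data of $c_f$, and bound the Neumann traces of $c_f$ with constants that are independent of $\kappa$ apart from the explicit polynomial factors.
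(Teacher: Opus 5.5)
Your second step, which you flag yourself as the main obstacle, is left unproved. The reason you give for its difficulty is also a misconception: $\mathcal{G}^*$ is not too small. Every sufficiently regular Helmholtz solution in $\mathcal{O}$, in particular every CGO solution, restricts on $\sourcedom$ to a \emph{pure single-layer potential} on $\M$. In \cref{lem:CGOSInteriorDirichlet} the paper writes $u_\xi|_{\sourcedom}=\SLP\phi_\xi$ with $\phi_\xi=(\frac12 I+\DLO'-i\kappa\SLO)^{-1}(\partial_\nu u_\xi-i\kappa u_\xi)$. This boundary operator is injective for every $\kappa>0$ because the interior \emph{impedance} problem is uniquely solvable, so interior Dirichlet eigenvalues never enter. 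It is therefore invertible by Riesz theory, and \eqref{eq:Gstar_SLP} gives $u_\xi|_{\sourcedom}=\mathcal{G}^*\overline{\phi_{-\overline{\xi}}}$. The quantitative ingredient is the wave-number-explicit bound of \cite{chandler-wilde_wave-number-explicit_2008} on this inverse, valid for star-shaped $\M$ with $\delta_{\M}>0$ and $R\kappa\ge 1$. It yields $\|\phi\|_{L^2(\M)}\le C\sqrt{\kappa^2+t^2}\,e^{Rt}$ with $C$ independent of $\kappa$. This, not a surface-area bound, is where $\kappa\ge 1/R$ and the geometric hypotheses are used, and it is what makes $C'$ independent of $\kappa$. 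The pointwise bound $|\hat f(\gamma)|\le C(\kappa^2+t^2)e^{2Rt}\delta$ is then Cauchy--Schwarz applied to $\langle\mathcal{G}M_f\mathcal{G}^*\phi_a,\phi_b\rangle$. Your fallback via Cauchy data of $c_f$ does not close as stated. The data $\delta$ controls only $\|c_f\|_{L^2(\M\times\M)}$, which does not bound $\partial_\nu c_f$ or the mixed normal derivatives, since the Dirichlet-to-Neumann map is unbounded on $L^2(\M)$. Interior regularity would need norms of $c_f$ off $\M$, which are not data. Transposing the exterior Dirichlet-to-Neumann map onto $v_j$ does work, but it just reproduces the single-layer density of $v_j$, and you would still owe a $\kappa$-explicit $L^2(\M)$ bound for it.

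The second gap is the wave-number dependence. Bounding $|\Im\zeta_j|$ by $\sqrt{|\xi|^2/4+\kappa^2}$ discards exactly the freedom that produces the H\"older-logarithmic estimate. With $\zeta_{1,2}=\pm(\frac12\xi+itd_1)+\sqrt{\kappa^2+t^2-|\xi|^2/4}\,d_2$ one only needs $\kappa^2+t^2\ge|\xi|^2/4$. The paper therefore saturates $\rho^2=4(\kappa^2+t^2)$ and fixes $6Rt=\log(3+\delta^{-2})$, so the cut-off $\overline{\rho}\ge 2\kappa$ grows with $\kappa$. On $B(\rho)$ your pointwise factor is $e^{R\sqrt{\rho^2+4\kappa^2}}$, whereas the paper's is $e^{2Rt}=e^{R\sqrt{\rho^2-4\kappa^2}}$. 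Your prescription $e^{2R\sqrt{\overline{\rho}^2+4\kappa^2}}=(3+\delta^{-2})^{1/3}$ gives $\overline{\rho}^{\,2}=\log^2(3+\delta^{-2})/(36R^2)-4\kappa^2$. This is not the $\overline{\rho}$ of the theorem, and it is negative unless $\log(3+\delta^{-2})>12R\kappa$. So your claim that the computation yields exactly \eqref{eq:Hoelderlog_stability} is false. At best you recover \eqref{eq:ExplicitStabilityEstimate} for fixed $\kappa$, with the $\kappa$-deterioration of \cite{li2023stability} that the theorem is designed to remove. Finally, the term $\overline{\rho}^{\,2+\tau}(3+\delta^{-2})^{1/3}\delta$ does not come from optimizing weights. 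The paper obtains it through the variational source condition of \cref{prop:VSC}: it bounds $\int_{B(\rho)}(1+|\gamma|^2)^m|\hat q^\dagger||\hat q^\dagger-\hat q|\,\diff\gamma$ by the pointwise estimate times $\int_{B(\rho)}(1+|\gamma|^2)^m|\hat q^\dagger|\,\diff\gamma\le\tilde c\rho^\tau$. Your direct splitting of $\|f\|_{H^m}^2$ is a legitimate shortcut for the stability statement alone; the paper needs the VSC also for \cref{thm:ConvergenceRate}. To reach the same form, however, you must write $\int_{B(\rho)}(1+|\gamma|^2)^m|\hat f|^2\le\sup_{B(\rho)}|\hat f|\int_{B(\rho)}(1+|\gamma|^2)^m|\hat f|$ and apply Cauchy--Schwarz against $\|f\|_{H^s}$. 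Squaring the pointwise bound instead gives $e^{4Rt}\delta^2$ and a different power of $\rho$.
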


Second, we establish convergence rates for spectral regularization methods
\[ q_\alpha = R_\alpha g^{\text{obs}},\;\text{where}\; R_\alpha = r_\alpha(\mathcal{C}^\ast \mathcal{C})\mathcal{C}^\ast,\]
using arguments provided in \cite{hohage2017}. Examples of regularization methods covered by this analysis include 
\emph{Tikhonov regularization}; \emph{iterated Tikhonov regularization}, \emph{modified spectral cut-off}, and 
\emph{Landweber iteration}, see \cite[Assumption 3.2]{hohage2017}. 
Such methods must be complemented by a rule $\alpha = \hat{\alpha}(\delta,\mathcal{C}^{\delta})$ 
determining how to choose the regularization parameter based on the noise level $\delta$ and the noisy data 
$\mathcal{C}^{\delta}$. Roughly speaking, a parameter choice rule is called \emph{weakly quasioptimal} 
if it is asymptotically optimal up to a multiplicative constant (see \cite[Definition 2.1]{RausHaemarik2007}). 
Weak quasioptimality has been demonstrated for many commonly used parameter choice rules including 
the discrepancy principle for methods with infinite qualification and the Lepskii balancing principle (see 
\cite{RausHaemarik2007}). 

\begin{theorem}
\label{thm:ConvergenceRate}
Suppose Assumptions \ref{ass:SourceAssumptions} and \ref{ass:Measurement} are satisfied. Let $m\geq 0$, $s>m$, and $\Cs>0$, and suppose that $\kappa\geq 1/R$, with $R$ defined in 
Assumption \ref{ass:Measurement}. If the exact solution 
$q^\dagger$ satisfies $q^\dagger\in H^s(\sourcedom)$ with $\Vert q^\dagger\Vert_{H^s}\leq \Cs$ and $R_\alpha$ is a spectral regularization 
method satisfying Assumption 3.2 in \cite{hohage2017} and 
$\hat{\alpha}$ is a weakly quasioptimal parameter choice rule, then for any 
$\mathcal{C}^\delta\in \HS(L^2(\M))$ with deterministic noise 
level $\Vert \Cq[q^\dagger]-\mathcal{C}^\delta\Vert_{\HS}<\delta$, the estimator $q_{\hat{\alpha}}^\delta=R_{\hat{\alpha}(\delta,\mathcal{C}^{\delta})}(\mathcal{C}^\delta)$ satisfies the error bound
\begin{align}
\label{eq:ExplicitConvergenceRate}
\Vert q_{\hat{\alpha}}^\delta-q^\dagger\Vert_{H^m}\leq C''\phi_{\kappa}(\delta)
\end{align}
with the function $\phi_{\kappa}$ from \cref{eq:Hoelderlog_stability} and a constant $C''$ depending 
on the regularization method. In particular, there exists a constant $C>0$ depending only on $m,s,\Cs,\kappa$, and $\M$ such that 
$\Vert q_{\hat{\alpha}}^\delta-q^\dagger\Vert_{H^m}\leq C''C\log\big(3+\delta^{-2}\big)^{-(s-m)}$.
\end{theorem}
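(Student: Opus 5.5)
The plan is to derive \Cref{thm:ConvergenceRate} from a variational source condition (VSC) for the linear operator $\mathcal{C}\colon L^2(\sourcedom)\to\HS(L^2(\M))$, measured in $H^m$. The same Fourier-coefficient estimate that underlies \Cref{thm:StabilityEstimate} will drive the VSC. Concretely, I aim for an inequality of the form
\begin{equation*}
\tfrac14\Vert q-q^\dagger\Vert_{H^m}^2\le \Vert q^\dagger\Vert^2-\Vert q\Vert^2 + \psi\big(\Vert \mathcal{C}(q-q^\dagger)\Vert_{\HS}^2\big)
\end{equation*}
for all $q$, with a concave index function $\psi$ satisfying $\psi(\delta^2)\sim \phi_\kappa(\delta)^2$. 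The $L^2$ penalty would be replaced by the Hilbert scale corresponding to $H^m$, or handled through the spectral-theoretic reformulation in \cite{hohage2017}.

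By \cite{hohage2017}, such a VSC is equivalent to a spectral tail condition. For spectral methods satisfying Assumption~3.2 there, together with a weakly quasioptimal rule $\hat\alpha$ \cite{RausHaemarik2007}, it then yields $\Vert q^\delta_{\hat\alpha}-q^\dagger\Vert\le C''\sqrt{\psi(\delta^2)}$. The work therefore reduces to verifying the VSC, and I would do this through the standard projection construction of \Cref{sec:VSC}. Let $P_\rho$ be the orthogonal projection onto Fourier modes $|\xi|\le\rho$, after extending $q$ to a periodic or compactly supported function on a cube containing $B(r)$. The high-frequency part is controlled by smoothness:
\begin{equation*}
\Vert (I-P_\rho)q^\dagger\Vert_{H^m}\le \rho^{-(s-m)}\Cs .
\end{equation*}
This produces the term $\tfrac{16}{3}\Cs^2\overline\rho^{-2(s-m)}$. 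The low-frequency part must be bounded by the data misfit, and this is where the CGO estimates of \Cref{sec:ProofMainProposition} enter.

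The key estimate from CGO solutions should read
\begin{equation*}
|\hat q(\xi)|\le C e^{c R\sqrt{|\xi|^2/4+\kappa^2}}\,\Vert \mathcal{C}(q)\Vert_{\HS}\qquad\text{for } |\xi|\le\rho .
\end{equation*}
To obtain it, I would test $\mathcal{C}(q)$ against Helmholtz solutions $v_1,v_2$ with $v_1\overline{v_2}=e^{-i\xi\cdot z}$, namely complex plane waves $e^{i\zeta_j\cdot z}$ with $\zeta_j\cdot\zeta_j=\kappa^2$. Their traces on $\M$ are related to the single-layer data via Green's representation, which uses the star-shapedness and $\delta_\M>0$. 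Summing over $|\xi|\le\rho$ with weights $(1+|\xi|^2)^m$, and counting roughly $\rho^3$ modes, gives
\begin{equation*}
\Vert P_\rho(q-q^\dagger)\Vert_{H^m}^2\lesssim \rho^{2+\tau}e^{cR\sqrt{\rho^2+\kappa^2}}\,\Vert\mathcal{C}(q-q^\dagger)\Vert_{\HS}.
\end{equation*}
Here the exponent $\tau$ absorbs the Sobolev weights. Choosing $\rho=\overline\rho(\delta,\kappa)$ so that the exponential becomes $(3+\delta^{-2})^{1/3}$ balances the two pieces, and the resulting $\psi$ is concave after taking a concave majorant. Finally, $\overline\rho\gtrsim\log(3+\delta^{-2})$ yields the purely logarithmic bound with a $\kappa$-dependent constant $C$.

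I expect the main obstacle to be the bookkeeping in the VSC derivation rather than the CGO estimate itself. First, the $H^m$ error norm has to be compatible with the $L^2$ penalty in spectral regularization, which may require working in a Hilbert scale or using the smoothness $q^\dagger\in H^s$ to pass between norms. Second, the cross term $2\langle q^\dagger,q-q^\dagger\rangle$ has to be split via $P_\rho$, and for the low-frequency part this requires expressing $P_\rho q^\dagger$ as $\mathcal{C}^*w$ with controlled $\Vert w\Vert$. I would try the latter first, constructing $w$ from the CGO traces so that the same exponential bound appears.
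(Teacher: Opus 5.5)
Your architecture matches the paper's. It obtains a VSC in $X=H^m(\sourcedom)$ from \cite[Theorem 2.1]{hohage2017} with Fourier low-pass projections $P_\rho$ and the tail bound $\Vert(I-P_\rho)q^\dagger\Vert_{H^m}\le\rho^{m-s}\Cs$, derives an ill-posedness bound from CGO estimates, and then applies the converse results of \cite{hohage2017}. Taking $X=H^m$ makes $\mathcal{C}^*$ the $H^m$-adjoint, so your worry about an $L^2$ penalty disappears. Mind the sign: the VSC needs $\Vert q\Vert^2-\Vert q^\dagger\Vert^2$ on the right, not the reverse.

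The genuine gap is in your key estimate. The statement requires $\phi_\kappa$ with $C'$ independent of $\kappa$. A factor $e^{cR\sqrt{|\xi|^2/4+\kappa^2}}$ already costs $e^{cR\kappa}$ at $\xi=0$. Since $\overline\rho(\delta,\kappa)\ge 2\kappa$, your exponent at $\rho=\overline\rho$ is at least $\sqrt2\,cR\kappa$. It therefore cannot equal $\frac{1}{3}\log(3+\delta^{-2})$ once $R\kappa\gtrsim\log(3+\delta^{-2})$, which is precisely the regime where $\phi_\kappa$ is supposed to be H\"older (Remark~\ref{rem:Hoelder_logarithmic}). As written, you would only obtain the final logarithmic bound, with a $\kappa$-dependent constant.

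The missing observation concerns the choice $\zeta_{1,2}=\mp\frac{1}{2}\xi\pm it d_1+\sqrt{\kappa^2+t^2-|\xi|^2/4}\,d_2$, with $d_1,d_2,\xi$ mutually orthogonal. For this choice the only constraint is $|\xi|\le 2\sqrt{\kappa^2+t^2}$. The exponential cost is therefore $e^{2Rt}$ with $t^2=(\rho^2/4-\kappa^2)_+$, which is no cost at all for $|\xi|\le 2\kappa$. The polynomial prefactor $\kappa^2+t^2=\rho^2/4$ is what produces $\overline\rho^{\,2}$ in $\phi_\kappa$; your bound has no such prefactor.

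For the constant to depend only on $\M$, you also need $u_\zeta|_{\sourcedom}=\mathcal{G}^*\phi$ with $\Vert\phi\Vert_{L^2(\M)}\le C\sqrt{\kappa^2+t^2}\,e^{Rt}$ uniformly in $\kappa$. Green's formula alone does not exhibit such a $\phi$, because it contains a double-layer term. A pure single-layer ansatz fails at interior Dirichlet eigenvalues. The paper instead does the following:
\begin{itemize}
\item it solves the interior impedance problem, writing $u_\zeta=\SLP(\frac{1}{2}I+\DLO'-i\kappa\SLO)^{-1}(\partial_\nu u_\zeta-i\kappa u_\zeta)$;
\item it uses $\overline{u_\zeta}=u_{-\overline{\zeta}}$ together with $\mathcal{G}^*\phi=\overline{\SLP\overline{\phi}}$;
\item it invokes the $\kappa$-uniform bound of \cite{chandler-wilde_wave-number-explicit_2008} on this inverse, which is where star-shapedness and $\delta_{\M}>0$ actually enter.
\end{itemize}

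Second, your displayed low-frequency bound $\Vert P_\rho(q-q^\dagger)\Vert_{H^m}^2\lesssim\rho^{2+\tau}e^{\cdots}\Vert\mathcal{C}(q-q^\dagger)\Vert_{\HS}$ is not homogeneous in $q-q^\dagger$: it is quadratic on the left and linear on the right. A corrected version, $\Re\langle P_\rho q^\dagger,q^\dagger-q\rangle_{H^m}\le\Cs\Vert P_\rho(q^\dagger-q)\Vert_{H^m}$ followed by counting modes, yields the power $\rho^{m+3/2}$. That is not $\rho^\tau$ with $\tau=\max(2m+\frac{3}{2}-s,0)$, and it again changes $\phi_\kappa$ when $\overline\rho\sim 2\kappa$ is large.

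Condition \eqref{eq:VSCillposednessCondition} only needs a bound on $\Re\langle P_\rho q^\dagger,q^\dagger-q\rangle_{H^m}$ that is linear in $q^\dagger-q$. The paper estimates it by $\sup_{B(\rho)}|\widehat{q^\dagger}-\hat q|\cdot\int_{B(\rho)}(1+|\xi|^2)^m|\widehat{q^\dagger}(\xi)|\diff\xi$. The integral is at most $\tilde c\rho^\tau$ by Cauchy--Schwarz against the $H^s$ weight, which gives $\Gamma=0$ and $\sigma(\rho)=C(\kappa^2+t^2)e^{2Rt}\tilde c\rho^\tau$.

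Your closing idea is exactly the dual form of this estimate. Writing $P_\rho q^\dagger=\mathcal{C}^*w$, with $w$ a superposition over $\xi\in B(\rho)$ of rank-one operators built from the CGO densities, gives $\Vert w\Vert_{\HS}\lesssim(\kappa^2+t^2)e^{2Rt}\rho^\tau$. Make that the main line and drop the mode-counting estimate.
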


Examples of weakly quasioptimal parameter choice rules include the
Lepski\u{i} principle for all of the above-mentioned regularization schemes
and the discrepancy principle for regularization methods with infinite classical qualification (cf. \cite{RausHaemarik2007}). 

\begin{remark}[H\"older-logarithmic rates]\label{rem:Hoelder_logarithmic}
First, note that the number $3$ in \cref{eq:ExplicitStabilityEstimate,eq:Hoelderlog_stability} 
could be replaced by any positive number greater than $1$ without changing the 
asymptotic behavior of the functions $\phi$ and $\phi_{\kappa}$ 
at $0$. It has been included to avoid the singularity of $\log(x)^{-1}$ at $x=1$.

In addition, note that, for fixed $\kappa$, the first term
in $\phi_{\kappa}^2$ is negligible in the limit $\delta\to 0$, 
and the second term behaves like $\phi^2$. However, 
for large values of $\kappa$ such that 
$(R\kappa)^2\gg \log(3+\delta^{-2})$, the second term 
becomes negligible. Hence the first 
term dominates and exhibits H\"older-type behavior as $\delta\to 0$, albeit with a large constant. 
Overall, we obtain H\"older-type behavior 
if $(6R\kappa)^2\sim \delta^{-2x}\log(3+\delta^{-2})$  with  $0<x<\frac{1}{3}\frac{1}{\max(2m+7/2-s,2)}$.
In this joint limit of small noise levels $\delta$ and 
large wave numbers $\kappa$, we have 
\begin{equation*}
    \phi_{\kappa}(\delta) \leq C''' \delta^\zeta \text{ with } \zeta := \frac{2(s-m)}{3\max\left(2(1+s-m),\frac{7}{2}+s\right)}
\end{equation*}
with a constant $C'''$ depending on $m,s,\Cs,x$, and $\M$.
\end{remark}

Having formulated our main results, we now discuss 
the H\"older-type estimate in Theorem 1.1 of \cite{li2023stability} mentioned in the introduction. 
The constant $C_1$ in this theorem depends on the wave number, 
but the wave number in turn depends on the data norm (corresponding to 
$\|F(f_1)-F(f_2)\|$ in \eqref{eq:DefinitionConditionalStability}) 
and tends to infinity as the data norm tends to $0$. 
Therefore, \cite[Theorem 1.1]{li2023stability} does not present a classical stability 
estimate in the sense of \eqref{eq:DefinitionConditionalStability}. In fact, as the 
forward operator at fixed frequency is infinitely smoothing, H\"older-type stability estimates 
are in principle impossible. This has been rigorously proved in the PhD thesis \cite[Prop.~4.5.6]{Mickan:25} 
using entropy arguments developed in \cite{dichristo2003,KRS:21,mandache2001}.

%% file: 03_VSC.tex
\section{Variational source conditions}
\label{sec:VSC}
\emph{Variational source conditions} for general nonsmooth operators between Banach spaces, $F\colon X\to Y$, were introduced in \cite{Hofmann_2007}
to deduce convergence rates for Tikhonov regularization. 
These conditions on $f^\dagger\in \dom(F)$ are formulated as a variational inequality 
\begin{equation}
    \label{eq:DefinitionVSC}
\forall f\in\dom(F):\quad 
    \frac{1}{2}\Vert f^\dagger-f\Vert^2_X \leq \Vert f\Vert_X^2 -\Vert f^\dagger\Vert^2_X + \Phi \left(\Vert F(f)-F(f^\dagger)\Vert_Y^2\right)
\end{equation}
with an index function $\Phi$. 
The aim of this section is to review a general method for the verification of such conditions and to show how 
a variational inequality of the form \cref{eq:DefinitionVSC} implies the two main results, \cref{thm:StabilityEstimate,thm:ConvergenceRate}.

We first argue that, if a variational source condition with index
function $\Phi$ is satisfied for all $f^\dagger$ in some subset $U\subset \dom(F)$, then a conditional stability estimate holds for $U$. 
Let $f_1,f_2\in U$. Without loss of generality, assume that $\Vert f_2\Vert \leq \Vert f_1\Vert$. Setting $f^\dagger=f_1$ and $f=f_2$ in \cref{eq:DefinitionVSC} then yields a conditional stability estimate of the form \cref{eq:DefinitionConditionalStability} with the index function $\phi=\sqrt{2\Phi (\cdot^2)}$. 
The results in \cite{hohage2017} establish an equivalence between variational source 
conditions and convergence rates of spectral regularization methods meeting certain assumptions.

Hence, establishing a variational source condition for the random inverse source problem in \Cref{sec:ProofMainProposition} proves both main theorems. 
To find an explicit expression for the index function $\Phi$ that determines the behavior of the conditional stability estimate \cref{eq:DefinitionConditionalStability} and the convergence rate \cref{eq:ExplicitConvergenceRate},
we rely on a characterization of variational source conditions established in \cite{hohage2017}. This method uses a family of subspaces of $X$ and corresponding projections to separate the problem into two sufficient conditions: a generalized smoothness assumption and an ill-posedness estimate. These conditions are as follows.

Let $f^\dagger \in \dom(F)$, let $\mathcal{I}$ be an index set, and assume that there exists a family of projections $P_r\in \mathcal{L}(X)$ for $r\in\mathcal{I}$. If there exist functions $\kappa,\sigma\colon\mathcal{I}\to(0,\infty)$ and a constant $\Gamma\geq0$ such that
\begin{subequations}\label{eqs:VSC_cond}
\begin{align}
    \label{eq:VSCsmoothnesCondition}
    \Vert f^\dagger-P_rf^\dagger\Vert_X &\leq \kappa(r),\hspace{1em} \inf_{r\in\mathcal{I}}\kappa(r)=0 \\
    \label{eq:VSCillposednessCondition}
    \langle f^\dagger,P_r(f^\dagger-f)\rangle_X &\leq \sigma(r)\Vert F(f^\dagger)-F(f)\Vert_Y + \Gamma\kappa(r)\Vert f^\dagger-f\Vert_X
\end{align}
\end{subequations}
for all $f\in\dom(F)$ with $\Vert f-f^\dagger\Vert_X\leq4\Vert f^\dagger\Vert_X$, then $f^{\dagger}$ satisfies a variational source condition \cref{eq:DefinitionVSC} with the concave index function 
\begin{equation}
\label{VSCfromTheorem}
\Phi (\Delta):=2 \inf_{r\in\mathcal{I}}\Big((\Gamma+1)^2\kappa(r)^2 + \sigma(r)\sqrt{\Delta}\Big)
\end{equation}
(see \cite[Theorem 2.1]{hohage2017}). 

Although the two functions $\kappa$ and $\sigma$ must be identified explicitly, the problem is simplified because \cref{eq:VSCsmoothnesCondition} usually reduces to a classical smoothness condition for $f^\dagger$ when the projections $P_r$ are chosen appropriately for the function space $X$. Therefore, the main difficulty lies in verifying the ill-posedness condition \cref{eq:VSCillposednessCondition}. In particular, we can formulate the following proposition concerning a variational source condition for the random inverse source problem by taking $\dom(F)=H^m(\sourcedom)$ and assuming that $f^\dagger$ belongs to a Sobolev ball in the smoother space $H^s(\sourcedom)\subset H^m(\sourcedom)$.
\begin{proposition} \label{prop:VSC}
Suppose that Assumptions \ref{ass:SourceAssumptions} and \ref{ass:Measurement} are satisfied, and let $0\leq m<s$. Suppose also that $\kappa\geq 1/R$, with $R$ defined in Assumption \ref{ass:Measurement}. 
Let $F=\mathcal{C}$, $X=H^m(\sourcedom)$, and $Y=\HS(L^2(\M))$. If the exact solution $q^\dagger$ satisfies $\Vert q^\dagger\Vert_{H^s}\leq C_s$, then a variational source condition of the form \cref{eq:DefinitionVSC} holds with index function
\begin{align*}
    \Phi (\delta^2)=\phi_{\kappa}(\delta)^2
\end{align*}
and the function $\phi_{\kappa}$ given in \cref{eq:Hoelderlog_stability}.
\end{proposition}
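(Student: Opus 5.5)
We will apply \cite[Theorem 2.1]{hohage2017} as recalled in \cref{eqs:VSC_cond,VSCfromTheorem}, with Fourier-type projections. Since $\sourcedom\subset B(r)$, we extend functions by zero and work on a cube $[-\pi L,\pi L]^3\supset B(r)$. To avoid boundary issues we can use an extension operator and Fourier series, or simply the Fourier transform on $\RR^3$. For $\rho>0$ we let $P_\rho$ be the Fourier cut-off to frequencies $|\xi|\le\rho$ in $X=H^m$, with the $H^m$ inner product realized by the weight $(1+|\xi|^2)^m$ (up to equivalence of norms on $\sourcedom$).

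\textbf{Smoothness condition.} The estimate \cref{eq:VSCsmoothnesCondition} is standard:
\[
\|q^\dagger-P_\rho q^\dagger\|_{H^m}\le (1+\rho^2)^{-(s-m)/2}\|q^\dagger\|_{H^s}\le C_s\rho^{-(s-m)},
\]
so we take $\kappa(\rho)=C_s\rho^{-(s-m)}$.

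\textbf{Ill-posedness condition.} We write $h=q^\dagger-q$. Then
\[
\langle q^\dagger,P_\rho h\rangle_{H^m}=\int_{|\xi|\le\rho}(1+|\xi|^2)^m\widehat{q^\dagger}(\xi)\overline{\hat h(\xi)}\,d\xi .
\]
By Cauchy--Schwarz this is bounded by $\|q^\dagger\|_{H^s}\,\big(\int_{|\xi|\le\rho}(1+|\xi|^2)^{2m-s}|\hat h|^2\big)^{1/2}$. The key step is a pointwise Fourier bound of the form
\[
|\hat h(\xi)|\le C e^{3R\sqrt{|\xi|^2/4+\kappa^2}\cdot(\ldots)}\|\mathcal{C}(h)\|_{\HS}
\]
for $|\xi|\le\rho$. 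To obtain it we use complex geometrical optics solutions. For $\xi\in\RR^3$ we choose $\zeta_1,\zeta_2\in\mathbb{C}^3$ with $\zeta_j\cdot\zeta_j=\kappa^2$ and $\zeta_1+\overline{\zeta_2}=\xi$. Then $v_j=e^{i\zeta_j\cdot x}$ solve the Helmholtz equation, and $v_1\overline{v_2}=e^{i\xi\cdot x}$. We write $|\operatorname{Im}\zeta_j|\approx\sqrt{|\xi|^2/4-\kappa^2}_+$ in suitable form; a careful choice gives growth $\lesssim\sqrt{|\xi|^2/4+\kappa^2}$, which is where $\overline\rho(\delta,\kappa)$ comes from.

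Green's representation on $\M$ expresses each $v_j$ on $\sourcedom$ via single- and double-layer potentials over $\M$ of $G(\cdot,z)$. Hence
\[
\int_{\sourcedom}h\,v_1\overline{v_2}\,dz=\langle\mathcal{C}(h)\phi_1,\phi_2\rangle
\]
for densities $\phi_j$ on $\M$ built from the Cauchy data of $v_j$. This works after handling the normal-derivative term, e.g.\ by writing $v_j=\mathcal{G}^*$-type potentials on $\M$. A cleaner route may be to use the Herglotz/single-layer representation $v_j|_{\sourcedom}=S\phi_j$, solving an interior problem in $\mathcal{O}$; this is where star-shapedness and $\delta_\M>0$ enter, via a Rellich identity that controls $\|\partial_\nu v\|_{L^2(\M)}$ by tangential data independently of eigenvalues. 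We therefore get
\[
|\hat h(\xi)|\le\|\mathcal{C}(h)\|_{\HS}\,\|\phi_1\|_{L^2(\M)}\|\phi_2\|_{L^2(\M)}\lesssim \mathrm{poly}(\rho,\kappa)\,e^{2R|\operatorname{Im}\zeta|}\,\|\mathcal{C}(h)\|_{\HS}.
\]

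\textbf{Assembly.} Integrating over $|\xi|\le\rho$ gives
\[
\sigma(\rho)\lesssim C_s\,\rho^{\tau'}\,e^{3R\sqrt{\rho^2/4+\kappa^2}},
\]
with the polynomial power producing $\tau=\max(2m+3/2-s,0)$; we can take $\Gamma=0$. Then
\[
\Phi(\delta^2)=2\inf_\rho\big(C_s^2\rho^{-2(s-m)}+\sigma(\rho)\delta\big).
\]
We choose $\rho=\overline\rho(\delta,\kappa)$, so that the exponential equals roughly $(3+\delta^{-2})^{1/3}$ times the lower-order factors. This reproduces \cref{eq:Hoelderlog_stability}, with the constant $16/3$ absorbing the factor $2$ and norm-equivalence constants.

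\textbf{Main obstacle.} The main obstacle is the CGO step: producing densities $\phi_j$ on $\M$ with norms bounded by $e^{R|\operatorname{Im}\zeta_j|}$ times polynomial factors, uniformly in $\kappa$ (including near interior Dirichlet eigenvalues). My first attempt would be the Rellich-identity argument on the star-shaped $\mathcal{O}$, bounding the Cauchy data of $v_j$ by $\sup_{\M}|v_j|$ and $|\zeta_j|$.
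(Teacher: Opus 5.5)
Your architecture is the paper's: Fourier cut-off projections, \cite[Theorem 2.1]{hohage2017} with $\Gamma=0$, and products of CGO solutions that bound $\hat h$ on $B(\rho)$ through $\langle\mathcal{C}(h)\phi_1,\phi_2\rangle$. However, the step you flag as the main obstacle is the core of the proof, and neither route you sketch closes it. Green's representation leaves a double-layer term, whereas only single-layer potentials lie in the range of $\mathcal{G}^*$ (via $\mathcal{G}^*\phi=\overline{\SLP\overline{\phi}}$ on $\sourcedom$). A single-layer representation from Dirichlet data, $\SLO\phi_j=v_j|_{\M}$, is not uniquely solvable at interior Dirichlet eigenvalues of $\mathcal{O}$, and you give no $\kappa$-uniform bound near them. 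Using a Rellich identity to bound the Cauchy data of $v_j$ is beside the point: these are explicit, with $|v_j|\le e^{Rt}$ and $|\partial_\nu v_j|\le|\zeta_j|e^{Rt}$ on $\M$. What must be bounded uniformly in $\kappa$ is the map from boundary data to density. The missing idea is the interior \emph{impedance} problem. Write $v_j=\SLP\phi_j$ in $\mathcal{O}$ with $(\frac{1}{2}I+\DLO'-i\kappa\SLO)\phi_j=\partial_\nu v_j-i\kappa v_j$. This combined operator is invertible for every $\kappa>0$, by uniqueness for the impedance problem plus Riesz theory. For star-shaped $\M$ with $\delta_{\M}>0$ and $R\kappa\ge 1$, its inverse is bounded independently of $\kappa$ by \cite[Corollary 4.4]{chandler-wilde_wave-number-explicit_2008}; that is where a Rellich argument genuinely enters. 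Together with $\overline{u_\zeta}=u_{-\overline{\zeta}}$, this yields densities with $\|\phi_j\|_{L^2(\M)}\le C\sqrt{\kappa^2+t^2}\,e^{Rt}$, where $t=|\Im\zeta_j|$.

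Your assembly also uses the wrong exponent, and with it you cannot reach $\phi_\kappa$. The correct estimate is $|\hat h(\gamma)|\le C(\kappa^2+t^2)e^{2Rt}\|\mathcal{C}(h)\|_{\HS}$ for all $|\gamma|\le\rho$, with $t>0$ free subject only to $\rho\le 2\sqrt{\kappa^2+t^2}$. So the exponential cost of the band $B(\rho)$ is $e^{2R\sqrt{\rho^2/4-\kappa^2}}$, with a \emph{minus} sign, which is exactly why large wave numbers help. Your $e^{3R\sqrt{\rho^2/4+\kappa^2}}$ grows with $\kappa$, and inserting $\overline{\rho}(\delta,\kappa)$ into it does not give $(3+\delta^{-2})^{1/3}$. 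The paper fixes $6Rt=\log(3+\delta^{-2})$ and saturates the side constraint, $\rho=2\sqrt{\kappa^2+t^2}$. Then $e^{2Rt}=(3+\delta^{-2})^{1/3}$ and $\kappa^2+t^2=\rho^2/4$, which supplies the factor $\rho^{2}$ in $\rho^{2+\tau}$; the factor $\rho^\tau$ comes separately from $\int_{B(\rho)}(1+|\gamma|^2)^m|\hat q^\dagger|\,d\gamma$. Finally, $v_1\overline{v_2}=e^{i(\zeta_1-\overline{\zeta_2})\cdot x}$, so the constraint must be $\zeta_1-\overline{\zeta_2}=\pm\xi$ rather than $\zeta_1+\overline{\zeta_2}=\xi$.
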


%% file: 04_Proof.tex
\section{Proof of \cref{prop:VSC}}
\label{sec:ProofMainProposition}
As pointed out before, the main difficulty in the verification of the variational source condition in \cref{prop:VSC} is the derivation of the ill-posedness condition \cref{eq:VSCillposednessCondition}. 
The proof uses geometrical optics solutions as in \cite{Alessandrini1988StableDO,HohageWeidling2015}. \emph{Complex geometrical optics solutions} (CGOSs), first introduced by Faddeev \cite{Faddeev1965}, are spatially exponentially growing solutions of the homogeneous partial differential equation under consideration. Their existence is known for many different equations (cf. \cite{Haehner1998,Uhlmann2008}). In the case of the Helmholtz equation \cref{eq:Helmholtz} with a homogeneous background, the complex geometrical optics solutions are given, for each $\xi\in \mathbb{C}^3\setminus \mathbb{R}^3$ such that $\xi\cdot\xi=\kappa^2$, by
\begin{equation}
\label{DefinitionCGOS}
u_\xi(x)=\exp(\imath \xi\cdot x) \text{ for }x\in \mathbb{R}^3.
\end{equation}
Note that these functions resemble plane waves that grow exponentially in the direction of $-\Im(\xi)$. 
The main idea of this method is to choose one direction 
$\imath \xi$ and its conjugate direction so that their product
is a plane wave of arbitrary frequency that can be controlled through the imaginary part of $\xi$. Hence, we can write the integral kernel of the Fourier transform as a product of two such solutions. In this way, CGOSs can be used to bound the Fourier coefficients of the source strength; these bounds will be
used to derive the ill-posedness estimate \cref{eq:VSCillposednessCondition} for the inverse random source problem. 

An essential step in the proof is to show that CGO solutions belong to the range of 
the adjoint $\mathcal{G}^*\colon L^2(\M)\to L^2(\sourcedom)$ of the volume-potential operator
$\mathcal{G}$ 
defined in \eqref{eq:VolumePotentialOperator}. This adjoint is related to the \emph{single-layer potential}  
\begin{align}\label{eq:SLP}
\begin{aligned}
 \SLP\colon H^{-\sfrac{1}{2}}(\M) &\to H^{1}_{\loc}(\RR^3) \\
  (\SLP\phi)(x)&:=\int_{\M}\GreenFunction[(x,y)]\phi(y)\diff y \quad x\in \mathbb{R}^3\setminus\M, 
\end{aligned}
\end{align}
by
\begin{align}\label{eq:Gstar_SLP}
    \left(\mathcal{G}^*\phi\right)(x) = \overline{(\SLP\overline{\phi})(x)}\qquad \text{for }x\in \sourcedom,\, \phi\in L^2(\M)\,.
\end{align}

The following lemma explicitly expresses the CGO solution $u_{\xi}|_{\sourcedom}$ as an element of the 
range of $\mathcal{G}^*$ with the help of the \emph{single-layer operator}
\begin{align*}
 \SLO\colon H^{-\sfrac{1}{2}}(\M) &\to H^{\sfrac{1}{2}}(\M) \\
  (\SLO\phi)(y)&:= \int_{\M}\GreenFunction(x,y)\phi(x)\diff x \quad y\in\M,
\end{align*}
and the normal derivative of the single-layer potential operator
\begin{align*}
    \DLO'\colon H^{-\sfrac{1}{2}}(\M) & \to H^{\sfrac{1}{2}}(\M) \\
    (\DLO'\phi)(y) &:= \int_{\M}\frac{\partial\GreenFunction}{\partial \nu_x}(x,y)\phi(x)\diff x \quad y\in\M.
\end{align*} 
Here $\nu_x$ denotes the exterior unit normal vector to $\mathcal{O}$ at $x\in\M=\partial{\mathcal{O}}$. 

\begin{lemma}\label{lem:CGOSInteriorDirichlet}
Consider the following boundary value problem in the bounded domain $\mathcal{O}$, where $\mathcal{O}$ and its boundary $\M$ are as in Assumption \ref{ass:Measurement}:
\begin{align}
\label{eq:InteriorDirichlet}
\begin{aligned}
- \Delta u - \kappa^2u &= 0 && \text{ in } \mathcal{O}, \\
 \frac{\partial u}{\partial \nu}-i\etaaskappa u&=g && \text{ on } \M.
\end{aligned}
\end{align}
% \todo[inline]{Ggf. gleich $\eta=\kappa$ oder so setzen!}
\begin{enumerate}
    \item\label{it:BVP1} 
    For any $g\in H^{-1/2}(\M)$ and any $\etaaskappa>0$, the boundary value problem \eqref{eq:InteriorDirichlet} has  
    a unique solution given by
    \begin{align}\label{eq:sol_repr}
    u = \SLP\left(\frac{1}{2}I+\DLO'-i\etaaskappa \SLO\right)^{-1}g.
    \end{align}
    \item\label{it:BVP2} 
    For any $\xi\in\mathbb{C}^3$ with $\xi\cdot\xi =\kappa^2$, we have 
    \begin{align}\label{eq:adjoint_repr}
        \begin{aligned}
        &u_{\xi}|_{\sourcedom} = (\SLP \phi_{\xi})_{\sourcedom} = \mathcal{G}^*\overline{\phi_{-\overline{\xi}}}\\
        &\text{with}\quad 
        \phi_{\xi}=\left(\frac{1}{2}I+\DLO'-i\etaaskappa \SLO\right)^{-1}g_{\xi}\quad \text{and}\quad
        g_{\xi} = \frac{\partial u_{\xi}}{\partial \nu}-i\etaaskappa u_{\xi}.
        \end{aligned}
    \end{align}
    \item\label{it:BVP3}
    Suppose that $R \kappa \geq 1$, where $R=\max\{\Vert x\Vert :x\in\M\}$ is defined in
    Assumption \ref{ass:Measurement}. For any $\xi\in\mathbb{C}^3$ with $\xi\cdot\xi =\kappa^2$, let $t:=|\Im(\xi)|$. Then there exists $\phi \in L^2(\M)$ such that $u_\xi|_{\sourcedom}=\mathcal{G}^\ast\phi$ and
    \begin{equation}\label{Gboundphi}
        \Vert \phi\Vert_{L^{2}(\M)}
        \leq C \sqrt{\kappa^2+ t^2}e^{Rt}
    \end{equation}
    holds for some constant $C$ independent of $\xi$ and $\kappa$.
\end{enumerate}
\end{lemma}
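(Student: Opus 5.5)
For part \ref{it:BVP1} we follow the classical single-layer approach. We seek the solution of \eqref{eq:InteriorDirichlet} in the form $u=\SLP\phi$ with $\phi\in H^{-1/2}(\M)$. The interior jump relation for the normal derivative, $\partial_\nu(\SLP\phi)_- = \DLO'\phi+\frac12\phi$, together with the continuity of the trace $(\SLP\phi)|_{\M}=\SLO\phi$, turns the impedance condition into the boundary integral equation $(\frac12 I+\DLO'-i\etaaskappa\SLO)\phi=g$.

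Uniqueness for the interior impedance problem with $\etaaskappa>0$ follows from Green's formula. If $g=0$, then $\int_{\mathcal O}|\nabla u|^2-\kappa^2|u|^2=\int_\M \bar u\,\partial_\nu u = i\etaaskappa\int_\M|u|^2$. Taking imaginary parts gives $u=0$ on $\M$, hence $\partial_\nu u=0$, and Holmgren's theorem / unique continuation yields $u=0$.

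Injectivity of the boundary operator needs a separate argument. Suppose $(\frac12 I+\DLO'-i\etaaskappa\SLO)\phi=0$. Then $u=\SLP\phi$ solves the homogeneous interior problem, so $u=0$ in $\mathcal O$. Consequently $\SLO\phi=0$, so the exterior field is a radiating solution with zero Dirichlet data and therefore vanishes. The jump relation $\partial_\nu u_- - \partial_\nu u_+=\phi$ then forces $\phi=0$.

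To obtain surjectivity we note that $\frac12 I+\DLO'$ is Fredholm of index zero on $H^{-1/2}(\M)$, since $\DLO'$ is compact for $C^{2,\alpha}$ boundaries. The operator $\SLO\colon H^{-1/2}\to H^{1/2}$ is compact as a map into $H^{-1/2}$, so the full operator is Fredholm of index zero. Riesz theory then gives bounded invertibility, and \eqref{eq:sol_repr} follows.

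For part \ref{it:BVP2} we note that $u_\xi$ is entire and solves the Helmholtz equation. Its impedance data $g_\xi$ are smooth, so by part \ref{it:BVP1} and uniqueness, $u_\xi=\SLP\phi_\xi$ in $\mathcal O\supset\sourcedom$. Using \eqref{eq:Gstar_SLP}, $\mathcal G^*\psi=\overline{\SLP\bar\psi}$. We want $\overline{\SLP\bar\psi}=u_\xi$, i.e.\ $\SLP\bar\psi=\overline{u_\xi}=u_{-\bar\xi}$, and $-\bar\xi\cdot(-\bar\xi)=\overline{\kappa^2}=\kappa^2$. Hence $\bar\psi=\phi_{-\bar\xi}$, which gives \eqref{eq:adjoint_repr}.

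For part \ref{it:BVP3} we take $\phi=\overline{\phi_{-\bar\xi}}$ and estimate $\|\phi\|_{L^2}\le \|A_\kappa^{-1}\|_{L^2\to L^2}\|g_{-\bar\xi}\|_{L^2}$, where $A_\kappa=\frac12 I+\DLO'-i\kappa\SLO$ with the choice $\etaaskappa=\kappa$. The data term is easy. Since $|u_{\xi}(x)|=e^{-\Im\xi\cdot x}\le e^{Rt}$ on $\M$ and $|\nabla u_\xi|\le|\xi|e^{Rt}$, with $|\xi|^2=|\Re\xi|^2+t^2=\kappa^2+2t^2$, we get $\|g\|_{L^2(\M)}\le |\M|^{1/2}(\sqrt{\kappa^2+2t^2}+\kappa)e^{Rt}\le C\sqrt{\kappa^2+t^2}e^{Rt}$.

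The main obstacle is a $\kappa$-uniform bound on $\|A_\kappa^{-1}\|_{L^2(\M)\to L^2(\M)}$ for $R\kappa\ge1$. This is exactly where star-shapedness and $\delta_\M>0$ enter. We plan to use the known wavenumber-explicit bound for the interior impedance problem on star-shaped domains, obtained via Rellich/Morawetz identities with multiplier $x\cdot\nabla\bar u$ (Melenk; Spence, Chandler-Wilde, Graham). For $u$ solving \eqref{eq:InteriorDirichlet} with $\etaaskappa=\kappa$ and $\kappa R\ge1$, that bound gives $\|\nabla u\|_{L^2(\M)}+\kappa\|u\|_{L^2(\M)}\lesssim\|g\|_{L^2(\M)}$ with constants depending only on $R$ and $\delta_\M$.

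We then recover $\phi$ from the jump relation $\phi=\partial_\nu u_- - \partial_\nu u_+$. Here $u_+$ is the radiating exterior field with Dirichlet data $u|_\M$. The exterior star-shaped Dirichlet-to-Neumann map satisfies $\|\partial_\nu u_+\|_{L^2}\lesssim \kappa\|u\|_{L^2}+\|\nabla_T u\|_{L^2}$, again by a Morawetz-type estimate valid for $\delta_\M>0$. Combining the two estimates yields $\|\phi\|_{L^2}\le C\|g\|_{L^2}$ uniformly in $\kappa\ge1/R$.

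If the exterior estimate proves delicate, the fallback is to cite the coercivity-type bounds for the combined operator on star-shaped domains directly. The cost would be an extra polynomial factor in $\kappa$, which would then have to be absorbed into $\sqrt{\kappa^2+t^2}$.
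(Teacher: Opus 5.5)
Your proposal is correct and follows the paper's proof. Parts~1 and~2 are argued the same way, and your exterior-uniqueness step in fact supplies the injectivity argument that the paper passes over quickly. In part~3 the paper uses the same bound on $g_\xi$ and simply cites the $\kappa$-uniform estimate $\|(\frac{1}{2}I+\DLO'-i\kappa\SLO)^{-1}\|_{L^2\to L^2}\le\frac{1}{2}(1+\theta(4\theta+13))$, $\theta=R/\delta_{\M}$, from \cite[Corollary 4.4]{chandler-wilde_wave-number-explicit_2008}, whose proof rests on essentially your decomposition $\phi=g-(P^+_{\mathrm{DtN}}-i\kappa)P^-_{\mathrm{ItD}}\,g$ with Rellich/Morawetz bounds for the interior impedance-to-Dirichlet and exterior Dirichlet-to-Neumann maps.

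The only caveat is your fallback: an extra polynomial factor in $\kappa$ cannot be absorbed into $\sqrt{\kappa^2+t^2}$ with a constant $C$ independent of $\kappa$, so that route would not give the lemma as stated.
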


\begin{proof}
\emph{Part \ref{it:BVP1}.}
The weak formulation of \cref{eq:InteriorDirichlet} is 
\[
\int_{\mathcal{O}} \nabla u \overline{\nabla v} - \kappa^2u\overline{v}\diff x 
+ i\etaaskappa \int_{\M} u \overline{v} \diff x
= -\int_{\M} g \overline{v}\diff x\quad \text{for all } v\in H^{1}(\mathcal{O}).
\]
Choosing $v=u$ for $g=0$ and taking the imaginary part shows that $u=0$ on $\M$ in the sense of the trace operator. 
By elliptic regularity results (see \cite[Theorem 6.15]{GT:77}) and the assumed boundary regularity, 
$u$ belongs to the H\"older space $C^{2,\alpha}(\overline{\mathcal{O}})$. 
Hence, the normal derivative $\frac{\partial u}{\partial \nu}$ exists on $\M$. 
Integration by parts shows that $\frac{\partial u}{\partial \nu}=0$.
It now follows from Holmgren's uniqueness theorem or, more simply, from Green's representation formula 
\cite[Theorem 2.3]{ColtonKressIAaES} that $u$ must vanish in $\mathcal{O}$. 

Because the boundary $\M$ is assumed to be $C^2$-smooth, the boundary integral operators 
$\SLO,\DLO'\colon H^{-1/2}(\M)\to H^{1/2}(\M)$ are bounded \cite[Corollary 3.7]{ColtonKressIAaES}.  
By the jump relations \cite[Theorem 3.1]{ColtonKressIAaES} and the properties of Green's function, $\SLP \phi$ is a solution of \cref{eq:InteriorDirichlet} if $\phi$ satisfies 
the boundary integral equation
\begin{align}\label{eq:BIE}
\left(\frac{1}{2}I+\DLO'-i\etaaskappa \SLO\right)\phi = g.
\end{align}
It follows from the uniqueness of \cref{eq:InteriorDirichlet} that the operator on the left-hand side 
of this equation is injective. By Riesz theory and the compactness of $\SLO$ and $\DLO'$, it is also 
surjective. This implies the existence of a solution to \eqref{eq:InteriorDirichlet} and the representation 
\cref{eq:sol_repr}.

\emph{Part \ref{it:BVP2}.} 
The identity $u_{\xi}|_{\sourcedom} = \SLP \phi_{\xi}$ follows immediately from the first part because 
$u_{\xi}$ solves \cref{eq:InteriorDirichlet} with $g=g_{\xi}$ under our assumption $\xi\cdot \xi = \kappa^2$. 
Note that $\overline{u_\xi}=u_{-\overline{\xi}}$. Hence, together with \cref{eq:Gstar_SLP} we obtain
\[
u_{\xi}|_{\sourcedom} = \overline{u_{-\overline{\xi}}}|_{\sourcedom}
=\overline{\SLP \phi_{-\overline{\xi}}} = \mathcal{G}^*\overline{\phi_{-\overline{\xi}}}.
\]

\emph{Part \ref{it:BVP3}.} 
Under Assumption \ref{ass:Measurement}, \cite[Corollary 4.4]{chandler-wilde_wave-number-explicit_2008} shows that, for $R \kappa \geq 1$, the inverse operator satisfies the wave-number independent bound
\begin{align}\label{eq:uniform_iop_bound}
\left\|\left(\frac{1}{2}I+\DLO'-i\etaaskappa \SLO\right)^{-1}\right\|_2 \leq \frac{1}{2} (1+ \theta(4\theta + 13)), 
\end{align}
where we have adapted the bound to dimension $n=3$ with a geometric constant 
$\theta:=\frac{R}{\delta_{\M}}$ as in \cite{chandler-wilde_wave-number-explicit_2008}. By Part \ref{it:BVP2}, it remains to bound
$\| g_\xi\|_{L^2(\M)}$ for $g_\xi = \frac{\partial u_\xi}{\partial \nu} - i \kappa u_\xi$. 
A straightforward computation shows that 
\begin{equation*}
    \| g_\xi\|_{L^2(\M)} \leq |\M|^{1/2} (|\xi| + \kappa) e^{Rt} 
    = |\M|^{1/2} \left(\left(\kappa^2+ 2t^2\right)^{\frac{1}{2}} + \kappa\right)e^{Rt}.
\end{equation*}
Because $\M$ is the boundary of a star-shaped domain $\mathcal{O}$ and $\delta_{\M}>0$,  
it admits a radial parameterization $S^2\to \M$, $\widehat{x}\mapsto \pi(\widehat{x})\widehat{x}$, with
a $C^{2,\alpha}$-smooth radial function $\pi \colon S^2 \to \RR_{+}$. 
Because $S^2$ is compact, the gradient of $\pi$ is bounded. 
This implies that the surface area $|\M|$ is finite. 
Combining this with \cref{eq:uniform_iop_bound} yields \cref{Gboundphi} with 
 $C=(1+\sqrt{2})|\M|^{1/2}$.
\end{proof}

In the next lemma, we prove bounds on the Fourier coefficients of the difference between two source strengths. For the Fourier transform $\mathcal{F}\colon L^2(\mathbb{R}^3) \to L^2(\mathbb{R}^3)$, we use the convention 
\begin{align}
    \label{eq:DefinitionFourierTransform}    
    (\mathcal{F}f)(\gamma):= \frac{1}{(2\pi)^{3/2}} \int_{\mathbb{R}^3} f(x)e^{-i\gamma\cdot x}\diff x, \qquad 
    \gamma\in \mathbb{R}^3.
\end{align}
For $f\in L^2(\RR^3)$, we use the notation 
$\hat{f}:= \mathcal{F}f$. 
\begin{lemma}\label{lem:FourBound}
Suppose that Assumptions \ref{ass:SourceAssumptions} and \ref{ass:Measurement} are satisfied. Let $q_1,q_2 \in L^\infty(\sourcedom)$ have corresponding covariance operators $\mathcal{C}_{q_1},\allowbreak \mathcal{C}_{q_2} \in \HS(L^2(\M))$. 
Let $\rho,t\in (0,\infty)$ such that
\begin{align}\label{eq:constraint}
\rho \leq 2\sqrt{\kappa^2+t^2}.
\end{align}
Then there exists a constant $C>0$, depending only on $\M$, such that 
\begin{equation}\label{FourierCoeffitientBound}
 |(\hat{q}_1-\hat{q}_2)(\gamma)|\leq C\left(\kappa^2+ t^2\right)e^{2Rt}\Vert \mathcal{C}_{q_1}-\mathcal{C}_{q_2}\Vert_{\HS},
\end{equation}
for all $\gamma\in\RR^3$ with $|\gamma|\leq\rho$, where $R=\max\{\Vert x\Vert :x\in\M\}<\infty$.
\end{lemma}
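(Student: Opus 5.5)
The plan is to write the Fourier coefficient of $q_1-q_2$ as a pairing of the covariance difference against two CGO solutions, and then bound that pairing with Lemma~\ref{lem:CGOSInteriorDirichlet}, Part~\ref{it:BVP3}. Extend $q:=q_1-q_2$ by zero outside $\sourcedom$. For $\phi,\psi\in L^2(\M)$ we have
\[
\langle (\mathcal{C}_{q_1}-\mathcal{C}_{q_2})\phi,\psi\rangle_{L^2(\M)} = \langle M_q\mathcal{G}^*\phi,\mathcal{G}^*\psi\rangle_{L^2(\sourcedom)} = \int_\sourcedom q\,(\mathcal{G}^*\phi)\,\overline{(\mathcal{G}^*\psi)}\diff x.
\]
If we choose $\xi,\zeta\in\mathbb{C}^3$ with $\xi\cdot\xi=\zeta\cdot\zeta=\kappa^2$ and $\mathcal{G}^*\phi=u_\xi|_\sourcedom$, $\mathcal{G}^*\psi=u_\zeta|_\sourcedom$, the integrand becomes $q(x)\exp(i(\xi-\overline{\zeta})\cdot x)$. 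We therefore need $\xi-\overline{\zeta}=-\gamma$, so that the integral equals $(2\pi)^{3/2}\hat q(\gamma)$.

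The key step is the explicit choice of $\xi$ and $\zeta$ for a given $\gamma$ with $|\gamma|\le\rho$. Pick orthonormal vectors $e_1,e_2\perp\gamma$; if $\gamma=0$, take any orthonormal triple. Set
\[
\xi = -\tfrac{\gamma}{2} + a e_1 + i t e_2,\qquad \zeta = \tfrac{\gamma}{2} + a e_1 + i t e_2,
\]
with $a\ge0$. Then $\xi-\overline{\zeta}=-\gamma+2ite_2$, which does not cancel the imaginary parts. So instead take $\zeta = \tfrac{\gamma}{2}+ae_1 - ite_2$, which gives $\overline{\zeta}=\tfrac{\gamma}{2}+ae_1+ite_2$ and $\xi-\overline{\zeta}=-\gamma$, as required.

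Next check the dispersion relations. We get $\xi\cdot\xi=\tfrac{|\gamma|^2}{4}+a^2-t^2$, and the same value for $\zeta\cdot\zeta$, since the cross terms vanish by orthogonality. Setting $a^2=\kappa^2+t^2-|\gamma|^2/4$ makes both equal to $\kappa^2$. This choice is admissible exactly because $|\gamma|\le\rho\le 2\sqrt{\kappa^2+t^2}$ by \eqref{eq:constraint}. Moreover $|\Im\xi|=|\Im\zeta|=t$.

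Now apply Part~\ref{it:BVP3} of Lemma~\ref{lem:CGOSInteriorDirichlet}, which assumes $R\kappa\ge1$. It yields $\phi,\psi$ with $\|\phi\|,\|\psi\|\le C\sqrt{\kappa^2+t^2}\,e^{Rt}$. Using $|\langle A\phi,\psi\rangle|\le\|A\|_{\mathrm{op}}\|\phi\|\|\psi\|\le\|A\|_{\HS}\|\phi\|\|\psi\|$, we obtain
\[
(2\pi)^{3/2}|\hat q(\gamma)|\le C^2(\kappa^2+t^2)e^{2Rt}\|\mathcal{C}_{q_1}-\mathcal{C}_{q_2}\|_{\HS},
\]
which is \eqref{FourierCoeffitientBound} with a constant depending only on $\M$.

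The main obstacle is a hypothesis mismatch. Part~\ref{it:BVP3} needs $R\kappa\ge1$, which Lemma~\ref{lem:FourBound} does not state. I would either assume it implicitly, since it is imposed in the main theorems, or note that for $R\kappa<1$ the uniform bound \eqref{eq:uniform_iop_bound} must be replaced by a $\kappa$-dependent one. A second point to settle is that the CGO identity from Part~\ref{it:BVP2} requires $\xi\notin\RR^3$ only for nomenclature. When $t>0$, as assumed here, everything applies directly.
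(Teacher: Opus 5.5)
Your proposal is correct and follows the paper's proof essentially verbatim. You use the same pair of CGO directions (your $e_1,e_2$ are the paper's $d_2,d_1$) and the same identity $(2\pi)^{3/2}\hat q(\gamma)=\langle(\mathcal{C}_{q_1}-\mathcal{C}_{q_2})\phi,\psi\rangle$ obtained via Part~\ref{it:BVP3} of Lemma~\ref{lem:CGOSInteriorDirichlet}, followed by Cauchy--Schwarz with $\|\cdot\|_{\mathrm{op}}\le\|\cdot\|_{\HS}$. Your remark about the missing hypothesis $R\kappa\ge 1$ is also accurate: the paper invokes Part~\ref{it:BVP3} the same way, so the $\kappa$-independent constant in the lemma tacitly relies on the condition $\kappa\ge 1/R$, which is imposed only in the main theorems and Proposition~\ref{prop:VSC}.
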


\begin{proof}
 Extending $q_1,q_2$ to $\mathbb{R}^3$ by zero, their Fourier transforms satisfy 
 \begin{equation}\label{FourierDef}
  |(\hat{q}_1-\hat{q}_2)(\gamma)|=\frac{1}{(2\pi)^{3/2}}\Big|\int_{\mathbb{R}^3}(q_1-q_2)e^{-i\gamma\cdot x}\diff x\Big|,
  \qquad \gamma\in\mathbb{R}^3.
 \end{equation}
 Choose two unit vectors $d_1,d_2\in\mathbb{R}^3$ such that $d_1,d_2,\gamma$ are mutually orthogonal and define
\begin{align*}
 a&:=-\frac{1}{2}\gamma+\imath td_1+\sqrt{\kappa^2+t^2-\frac{|\gamma|^2}{4}}d_2, \\
 b&:=\frac{1}{2}\gamma-\imath td_1+\sqrt{\kappa^2+t^2-\frac{|\gamma|^2}{4}}d_2.
\end{align*}
Note that the argument of the square root is nonnegative by \cref{eq:constraint} and that
\[a-\Bar{b}=-\gamma\hspace{1cm}|\Im(a)|=|\Im(b)|=t\hspace{1cm}a\cdot a=b\cdot b=\kappa^2.\]
We get $e^{-i\gamma\cdot x}=u_{a}(x)\overline{u_b(x)}$, and substituting this into \cref{FourierDef} gives
\begin{align} 
(2\pi)^{3/2} |(\hat{q}_1-\hat{q}_2)(\gamma)| &=\Big|\int_{\mathbb{R}^3}(q_1-q_2)u_a\bar{u_b}\diff x\Big| 
 =\big|\big\langle(q_1-q_2)u_a,u_b \big\rangle_{L^2(\mathbb{R}^3)} \big|\nonumber \\
\label{eq:CGOSReplacement}
 &=\big|\big\langle(q_1-q_2)u_a,u_b \big\rangle_{L^2(\sourcedom)} \big|.
\end{align}
In the last equality, we used the fact that $q_1$ and $q_2$ are supported in $\sourcedom$.
By part \ref{it:BVP3} of \cref{lem:CGOSInteriorDirichlet}, there exist $\phi_a,\phi_b\in L^2(\M)$ such that $\mathcal{G}^\ast\phi_a =u_a$ and $\mathcal{G}^\ast\phi_b =u_b$ in $\sourcedom$. 
Substitution into \cref{eq:CGOSReplacement} yields
\begin{align*}
 (2\pi)^{3/2}|(\hat{q}_1-\hat{q}_2)(\gamma)| 
 &=  \big| \big\langle(q_1-q_2)\mathcal{G}^\ast\phi_a,\mathcal{G}^\ast\phi_b \big\rangle_{L^2(\sourcedom)}  \big|\\ 
& =  \big| \big\langle\mathcal{G}M_{q_1-q_2}\mathcal{G}^\ast\phi_a,\phi_b \big\rangle_{L^2(\M)}\big|.
\end{align*}
Applying the Cauchy-Schwarz inequality and the bound \cref{Gboundphi} from the previous lemma yields the assertion \cref{FourierCoeffitientBound}.
\end{proof}

\begin{remark}
  \label{rem:uniqueness}
The lemma immediately implies uniqueness. If $q_1,q_2\in L^\infty(\sourcedom)$ are such that the corresponding data satisfy $\Cq[q_1]=\Cq[q_2]\in \HS(L^2(\M))$, then \cref{FourierCoeffitientBound} yields
\[|(\hat{q}_1-\hat{q}_2)(\gamma)|=0\]
for $\gamma \in B(\rho)$, i.e., the Fourier transform of $q_1-q_2$ vanishes on an open set. By analytic continuation, it therefore vanishes throughout $\mathbb{R}^3$. This yields $q_1=q_2$ in $\sourcedom$ and hence uniqueness.
\end{remark}

\Cref{lem:FourBound}, together with a Sobolev-type smoothness assumption on the true solution, enables us to prove \cref{prop:VSC}. Specifically, we choose the domain $H^m(\sourcedom)=\dom(\COperator)$ for $m\geq 0$, with the canonical embedding into $L^2(\sourcedom)$, and assume that the true solution $q^\dagger$ belongs to $H^s$, where $m<s$. 

\Cref{lem:FourBound} bounds the low-frequency Fourier coefficients of differences between sources in terms of the Hilbert--Schmidt norm of the corresponding covariance operators. A natural choice for the family of projections 
in \cref{eqs:VSC_cond} is given by the low-frequency filters 
\begin{align*}
P_\rho\colon H^m&\to H^m \\
P_\rho f&:=\mathcal{F}^\ast\phi_\rho \mathcal{F} f
\end{align*}
for $\rho>0$, where $\phi_\rho$ is the cutoff function given by
\[
\phi_\rho(\omega)=
\begin{cases}
1 \hspace{1em} \omega\in B(\rho) \\
0 \hspace{1em} \omega\notin B(\rho)
\end{cases}.
\]
Note that the Fourier transform defined in \cref{eq:DefinitionFourierTransform} extends to $H^m$ in the usual way. 

\begin{proof}[Proof of \cref{prop:VSC}]
We follow the approach outlined in \Cref{sec:VSC} and verify the conditions in 
\cref{eqs:VSC_cond} to obtain a variational source condition with an index function given 
implicitly by an infimum and then derive an explicit estimate. 

\emph{Step 1: Verification of the generalized smoothness condition of the exact solution \cref{eq:VSCsmoothnesCondition}.}
This condition follows from the Sobolev smoothness assumption on the exact solution:
\begin{align*}
\Vert (I-P_\rho)q^\dagger\Vert_{H^m}^2 &= \int_{\mathbb{R}^3} (1+|\gamma|^2)^m|\mathcal{F}((I-P_\rho)q^\dagger)(\gamma)|^2\diff \gamma \\
&\leq \frac{1}{(1+\rho^2)^{s-m}}\int_{\mathbb{R}^3\setminus B(\rho)}\big(1+|\gamma|^2\big)^s|\hat{q}^\dagger(\gamma)|^2 \\
&\leq\rho^{2(m-s)}\Vert q^\dagger\Vert_{H^s}^2
\end{align*}
for every $\rho>0$. Thus, choosing
\begin{equation*}
\kappa(\rho)  :=\rho^{m-s}\Cs
\end{equation*}
yields \cref{eq:VSCsmoothnesCondition} because $\inf_{\rho}\kappa(\rho)=0$ for $m<s$.

\emph{Step 2: Verification of the local degree of ill-posedness condition \cref{eq:VSCillposednessCondition}.}
We rewrite the left-hand side of \cref{eq:VSCillposednessCondition} as
\begin{equation}
    \label{eq:LocDegIllPos}
    \Re\langle P_\rho q^\dagger,q^\dagger-q\rangle_{H^m}=\Re\int_{B(\rho)}\big(1+|\gamma|^2\big)^m\hat{q}^\dagger(\gamma) \big(\overline{\hat{q}^\dagger(\gamma)-\hat{q}(\gamma)}\big) \diff \gamma.
\end{equation}
We have
\begin{equation}
\label{eq:Inequality}
\int_{B(\rho)}(1+|\gamma|^2)^m |\hat{q}^\dagger(\gamma) | \diff \gamma
\leq \tilde{c} \rho^\tau
\end{equation}
with $\tau=\max\{2m+\frac{3}{2}-s,0\}$ 
by a straightforward adaptation of the analogous 
estimate in \cite[Lemma 4.3]{HohageWeidling2015} 
for the discrete Fourier transform. 
Together with the bound \cref{FourierCoeffitientBound} on 
$|\hat{q}^\dagger(\gamma)-\hat{q}(\gamma)|$ 
we obtain the inequality
\begin{align*}
	\Re\langle P_\rho q^\dagger,q^\dagger-q\rangle_{H^m}  
	\leq  &C\left(\kappa^2+ t^2\right)e^{2Rt}\Vert \mathcal{C}_{q^\dagger}-\mathcal{C}_{q}\Vert_{\HS} \tilde{c}\rho^\tau.
\end{align*}
Therefore, the ill-posedness condition \cref{eq:VSCillposednessCondition} holds true with 
$\Gamma=0$ and
\begin{equation*}
    \sigma(\rho):=C\left(\kappa^2+ t^2\right)e^{2Rt}\tilde{c}\rho^\tau.
\end{equation*} 

\emph{Step 3: Estimation of the infimum.}
The first two steps of the proof and \cite[Theorem 2.1]{hohage2017} yield a variational 
source condition. Taking into account \cref{eq:constraint} in \cref{lem:FourBound},
we obtain the following expression for the index function in \cref{VSCfromTheorem}:
\begin{equation}
    \label{eq:concaveIndexFunction}
    \Phi(\delta^2):=\inf_{\rho,t>0,\rho\leq2\sqrt{\kappa^2+t^2}} \left[C\left(\kappa^2+ t^2\right)e^{2Rt}\delta c_4\rho^\tau +\frac{16}{3}\rho^{2(m-s)}\Cs^2
    \right].
\end{equation}
To derive an explicit estimate of this infimum, 
we choose the parameters $\rho$ and $t$ as functions of the noise level $\delta$ such that the side constraints are satisfied. A logarithmic variational source condition with the index function in \cref{eq:ExplicitStabilityEstimate} can be derived directly 
by choosing
\begin{equation}\label{eq:ChoiceTRho}
6Rt=\log(3+\delta^{-2})=3R\rho
\end{equation}
Then $\rho=2t$, and the side condition $\rho\leq 2\sqrt{\kappa^2+t^2}$ is satisfied. Hence, $\Phi$ 
in \cref{eq:concaveIndexFunction} is bounded by
\begin{align*}
\Phi(\delta^2)&\leq
    \frac{16}{3}(3R)^{2(s-m)}\Big(\log(3\!+\!\delta^{-2})\Big)^{-2(s-m)}\Cs^2\left(1\!+\!o(1)\right) 
    \\&
    \leq C^2\Big(\log(3\!+\!\delta^{-2})\Big)^{-2(s-m)}
\end{align*}
with a constant $C$ depending on $C_s,\M,\kappa$, and $R$.

We modify the choice in \cref{eq:ChoiceTRho} as follows to include the wave number $\kappa$ explicitly and ensure that the 
side constraint is satisfied with equality, i.e., 
$(\rho/2)^2=\kappa^2+t^2$:
\begin{equation}
    \label{eq:ChoiceTRhoForHoelder}
    (6Rt)^2 = \left(\log(3+\delta^{-2})\right)^2 = (3R)^2(\rho^2-4\kappa^2).
\end{equation}
This leads to $\rho = \overline{\rho}(\delta,\kappa)$ 
and the inequality $\Phi(\delta^2)\leq \phi_{\kappa}(\delta)^2$
with $\overline{\rho}$ and $\phi_{\kappa}$ defined 
in \cref{eq:Hoelderlog_stability}. 
\end{proof}

% \left( \left(\kappa^2\!+\!2t^2\right)^{\sfrac{1}{2}}\!+\!\kappa\right)^2
% = (2kappa^2 + 2t^2 + 2kappa(kappa^2+2t^2)^(1/2)) 
% = (2(kappa^2 + t^2) + 2 (kappa^4 +2kappa^2t^2)^(1/2)))
% <=(2(kappa^2 + t^2) + 2 (kappa^4 +2kappa^2t^2 + t^4)^(1/2)))
% = (2(kappa^2 + t^2) + 2 (kappa^2 +t^2))
% = 4*(kappa^2 + t^2) 
% = 4 * (rho/2)^2 
% = rho^2

% --> 
% C\left( \left(\kappa^2\!+\!2t^2\right)^{\sfrac{1}{2}}\!+\!\kappa\right)^2e^{2Rt}\delta c_4\rho^\tau
% <= C rho^(2+tau) (3+delta^-2)^(1/3) delta

%% file: 05_NumericalParts.tex
\section{Numerical tests of convergence rates}\label{sec:Numerical}
In this section, we present a numerical analysis of the convergence rates and compare the results with the rates established in \cref{thm:ConvergenceRate}.  

\paragraph*{Numerical setup} We first present the discretization of the sources in the domain. We assume that the sources are supported in the cube $\sourcedom=\Big[-\frac{\pi}{\sqrt{3}},\frac{\pi}{\sqrt{3}}\Big]^3$.
Functions $f\colon\sourcedom\to [0,\infty)$ are represented by their values on a uniform rectangular grid of $60\times60\times60$ points. To study the effect of the smoothness of the exact solution on the convergence rates, we construct exact source strengths $q^{\dagger}$ as multivariate splines of degree $1$ or $3$. More precisely, let $\{b_1^\lambda,\dots,b_n^\lambda\}$ be the set of B-splines of degree $\lambda\in \{1,3\}$ on $[-\frac{\pi}{\sqrt{3}},\frac{\pi}{\sqrt{3}}]$ associated with equidistant knots that divide the interval into 12 subintervals. We define a basis of functions $\{f^\lambda_{k,l,m}\}_{k,l,m\in\{1,\dots,n\}}$ on the cube $\Big[-\frac{\pi}{\sqrt{3}},\frac{\pi}{\sqrt{3}}\Big]^3$ by 
\begin{equation*}
    f^\lambda_{k,l,m}(x_1,x_2,x_3):=b^\lambda_k(x_1)b^\lambda_l(x_2)b^\lambda_m(x_3) 
    = (b^\lambda_k\otimes b^\lambda_l\otimes b^\lambda_m)(x_1,x_2,x_3).
\end{equation*}
Note that $b^{\lambda}_m\in H^{\lambda+\frac{1}{2}-\epsilon}(\mathbb{R})$ for all $\epsilon>0$. (For $\lambda=0$ this follows from explicit computations, and for $\lambda\in\mathbb{N}$ we use the fact that
$\frac{d^{\lambda}}{dx^{\lambda}}b^{\lambda}_m$ is piecewise constant.) It follows that $f^{\lambda}_{k,l,m}\in H^{\lambda+\frac{1}{2}-\epsilon}(\sourcedom)$.
We can represent each multivariate spline in this basis by spline coefficients $(c^\lambda_{k,l,m})_{k,l,m}\in\mathbb{R}^{n\times n\times n}$, where $n$ is the dimension of the one-dimensional spline space. Moreover, because B-splines are nonnegative, $q^{\dagger}:=\sum_{k,l,m}c^{\lambda}_{k,l,m} f^\lambda_{k,l,m}$ is nonnegative whenever all $c^{\lambda}_{k,l,m}$ are nonnegative.
In our experiments, we use two types of $q^{\dagger}$ for each of $\lambda=1$ and $\lambda=3$. For the first type, the coefficients are generated randomly such that $c^{\lambda}_{k,l,m}\sim |X^{\lambda}_{k,l,m}|$, where the $X^{\lambda}_{k,l,m}\sim N(0,1)$ are independent and identically distributed. The functions of the second type approximate a half-sphere and a dot,  
as illustrated in \cref{fig:ExactSolLinear,fig:ExactSolCubic}.

Measurements are taken on $\M=S_R$, a sphere of radius $R$, and functions in $L^2(S_R)$ are expanded in the standard orthonormal basis of spherical harmonics. This representation is convenient because the single-layer potential $\SLP$ in \cref{eq:SLP} can be expressed explicitly in this basis in terms of spherical Hankel and Bessel functions. The same holds for the volume potential $\mathcal{G}$ and its adjoint $\mathcal{G}^*$, because the latter essentially coincides with $\SLP$. The codomain of the forward operator is the space of Hilbert--Schmidt operators on $L^2(S_R)$, which are represented by matrices in the spherical-harmonic basis.

Synthetic data are generated from sample autocorrelations of solutions to the Helmholtz problem driven by random source realizations with variance given by the exact solution $q^\dagger$. More precisely, let $\underline{q}^\dagger$ denote the discrete representation of the exact solution $q^\dagger$ described above, and let $\underline{\mathcal{G}}$ denote the discretization of the volume-potential operator $\mathcal{G}$ that maps $\underline{q}^\dagger$ to the spherical-harmonic coefficients. The discrete noisy data for the inverse problem are then given by
\begin{equation*}
    \underline{C}_{\text{obs}}:= \frac{1}{N} \sum_{i=1}^N \underline{\mathcal{G}}\xi_i (\underline{\mathcal{G}}\xi_i)^\ast
    \quad \text{where}\quad \xi_i\stackrel{\mathrm{i.i.d.}}{\sim} \mathcal{N}_{\mathbb{C}}(0,\underline{q}^\dagger)
\end{equation*}
where $N$ denotes the number of samples. The discrete exact solution is given by 
\begin{equation*}
    \underline{C}^\dagger = \underline{\mathcal{G}} \diag(\underline{q}^\dagger) \underline{\mathcal{G}}^\ast.
\end{equation*}

The numerical setup involves the following four parameters: 
\begin{enumerate}[label=(\roman*)]
    \item the measurement distance, i.e., the radius $R$ of the measurement sphere, which is fixed at $R=4$,
    \item the wave number $\kappa$,
    \item the smoothness of the exact solution $q^\dagger$ 
    determined by the spline degree $\lambda$,
    \item the smoothness of the reconstructions, determined by the index $m$ of the squared Sobolev 
    norm $\|\cdot\|_{H^m}^2$ used as  
    a penalty term in the Tikhonov functional.
\end{enumerate}
For each tuple $(R,\kappa,\lambda,m)$ we
generate synthetic data for 18 different values of \(N\), ranging from \(50\) to \(95000\).  
This leads to different noise levels proportional to $N^{-1/2}$. For reconstruction, we apply classical Tikhonov regularization 
\begin{equation}
    \label{eq:TikhonovReg}
    \hat{q}_\alpha := \argmin_{q\in H^m} \left[\frac{1}{2}\Vert\mathcal{C}(q) - C_{\text{obs}}\Vert_{\HS(L^2(S_R))}^2 + \frac{\alpha}{2} \Vert q\Vert_{H^m(D)}^2\right],
\end{equation}
using the conjugate-gradient (CG) method to find the minimizer. 
We sequentially lower the regularization parameter until a discrepancy principle is satisfied. 

\paragraph*{Discussion of the results} 
Note the following features of the reconstructions displayed in \cref{fig:ReconLinearSmallN,fig:ReconLinearLargeN} and 
\cref{fig:ReconCubicSmallN,fig:ReconCubicLargeN} for 
the exact source strengths in \cref{fig:ExactSolLinear,fig:ExactSolCubic}, respectively:
\begin{itemize}
\item For the comparatively small sample size of $N=550$, the main features of $q^{\dagger}$ are clearly visible for the second type of source (see \cref{fig:ReconLinearSmallN,fig:ReconCubicSmallN}). For the highly varying source of the first type, however, this sample size appears insufficient (see \cref{fig:CompareRandomSampleNumberComparison}).
\item
For the same number of samples, the reconstructions are better for the smoother solutions $q^{\dagger}$, 
as observed in \cref{fig:ReconCubicSmallN,fig:ReconLinearSmallN} and \cref{fig:ReconCubicLargeN,fig:ReconLinearLargeN}, respectively.
\item
Reconstructions improve as the wave number $\kappa$ increases (equivalently, as the wavelength decreases). This effect is discussed in more detail below.
\end{itemize}
\begin{figure}[htb]
\centering
    \begin{subfigure}[t]{0.32\textwidth}
        \centering
        \includegraphics[width=0.75\linewidth]{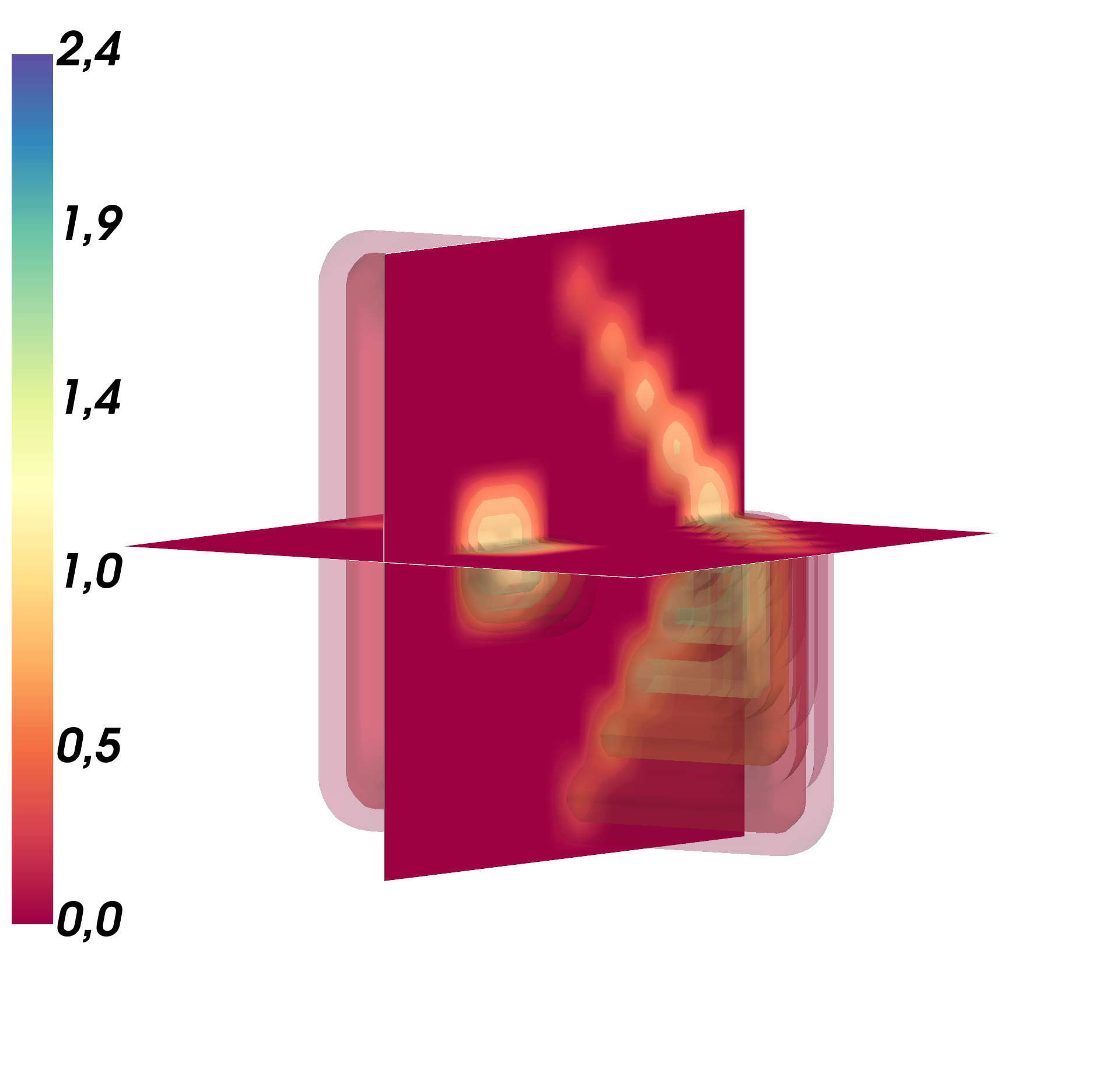}
        \caption{$q^{\dagger}$, multivar. linear spline}
        \label{fig:ExactSolLinear}
    \end{subfigure}
    \begin{subfigure}[t]{0.32\textwidth}
        \centering
        \includegraphics[width=0.75\linewidth]{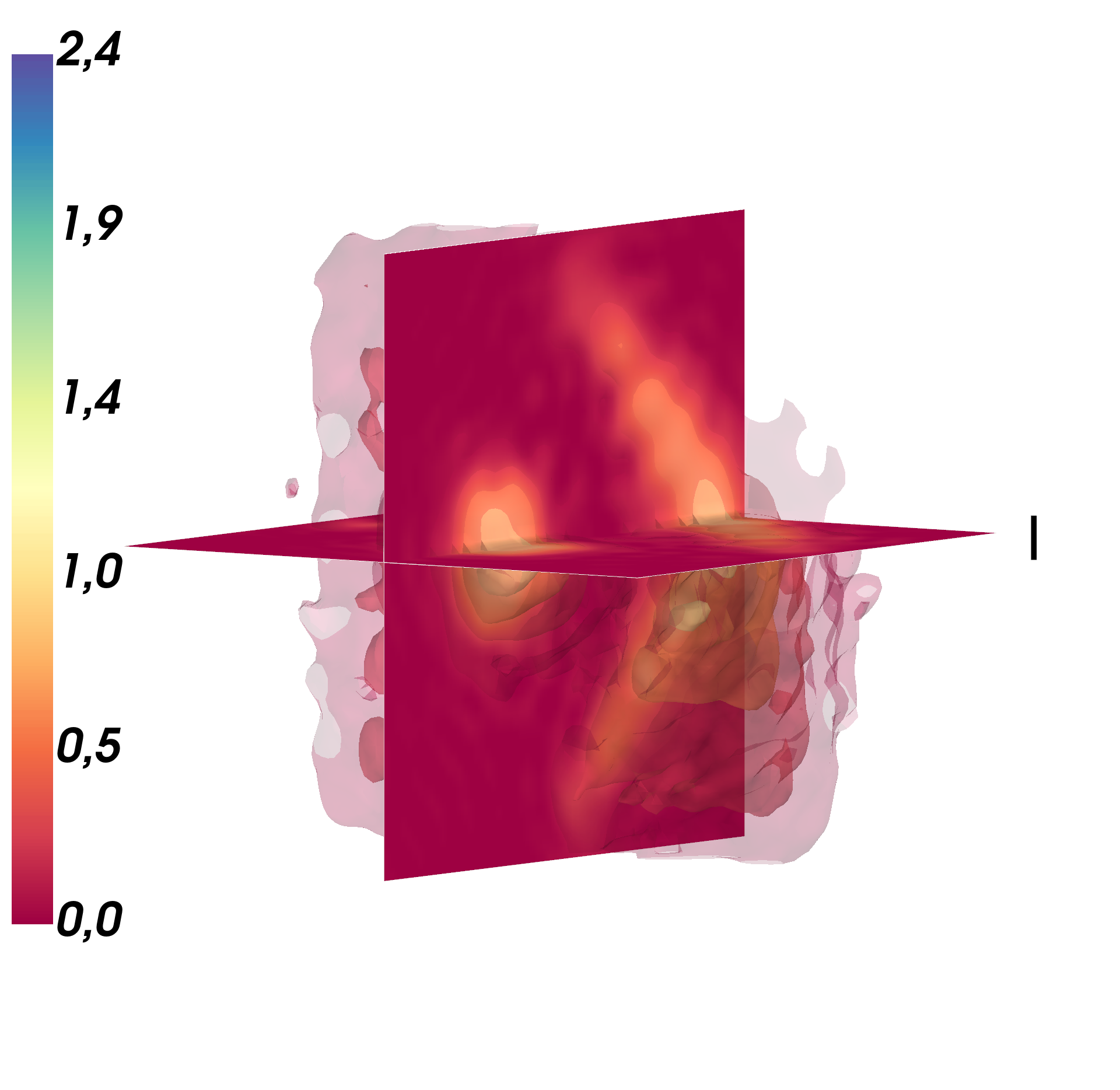}
        \caption{$N=550$ }
        \label{fig:ReconLinearSmallN}
    \end{subfigure}
    \begin{subfigure}[t]{0.32\textwidth}
        \centering
        \includegraphics[width=0.75\linewidth]{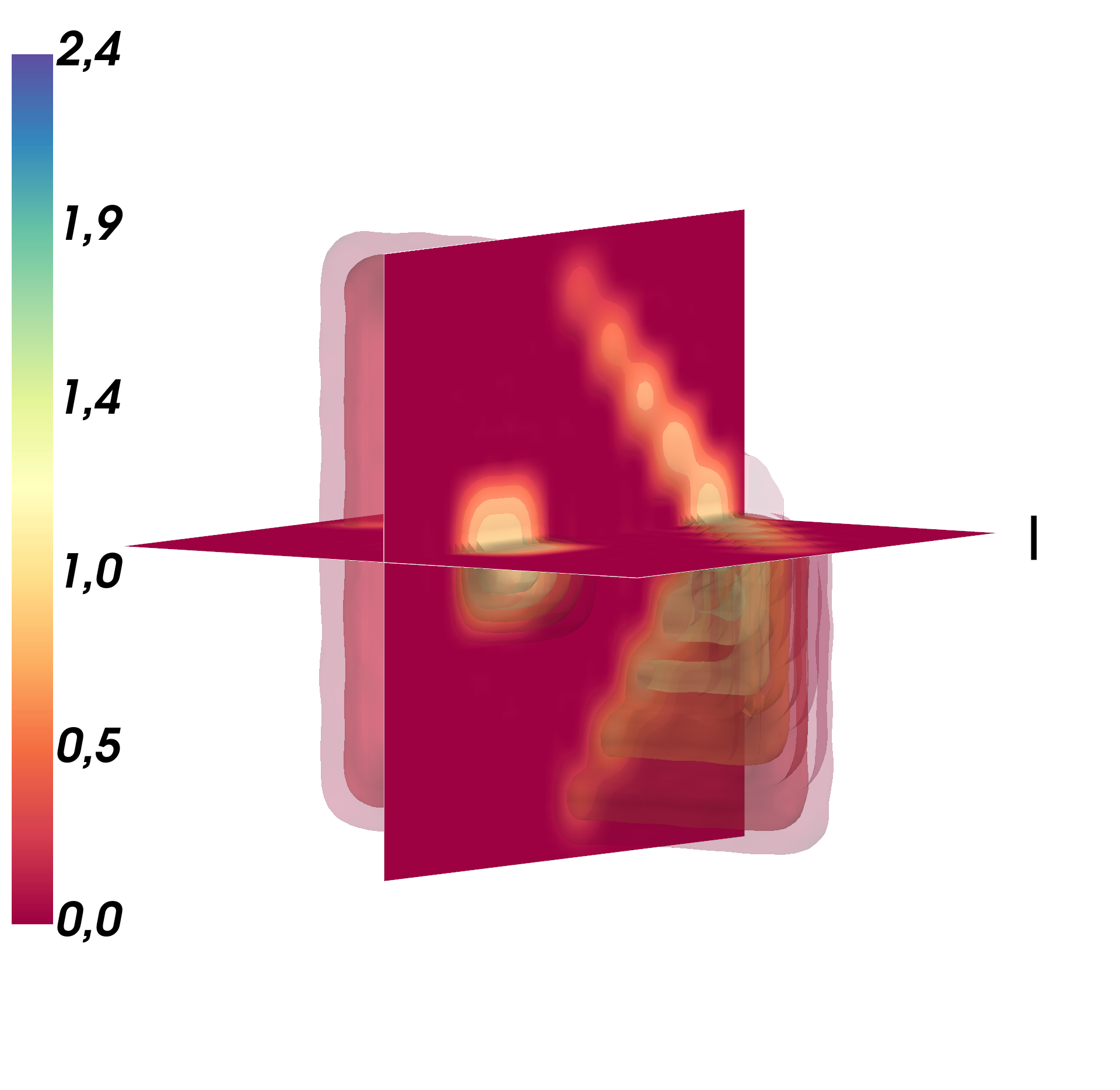}
        \caption{$N=95000$ }
        \label{fig:ReconLinearLargeN}
    \end{subfigure}
\newline
\centering
    \begin{subfigure}[t]{0.32\textwidth}
        \centering
        \includegraphics[width=0.75\linewidth]{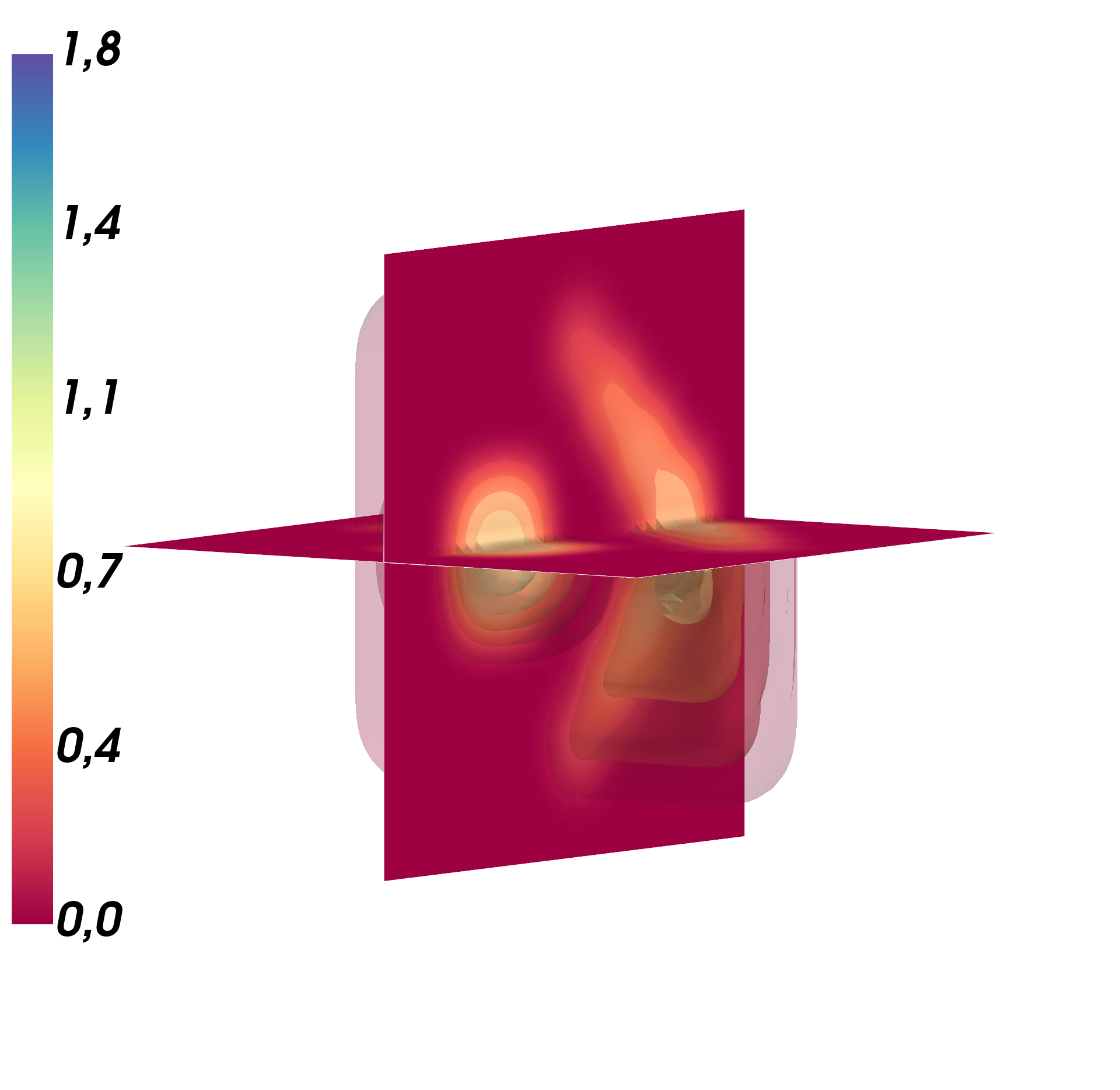}
        \caption{$q^{\dagger}$, multivar. cubic splines}
        \label{fig:ExactSolCubic}
    \end{subfigure}
    \begin{subfigure}[t]{0.32\textwidth}
        \centering
        \includegraphics[width=0.75\linewidth]{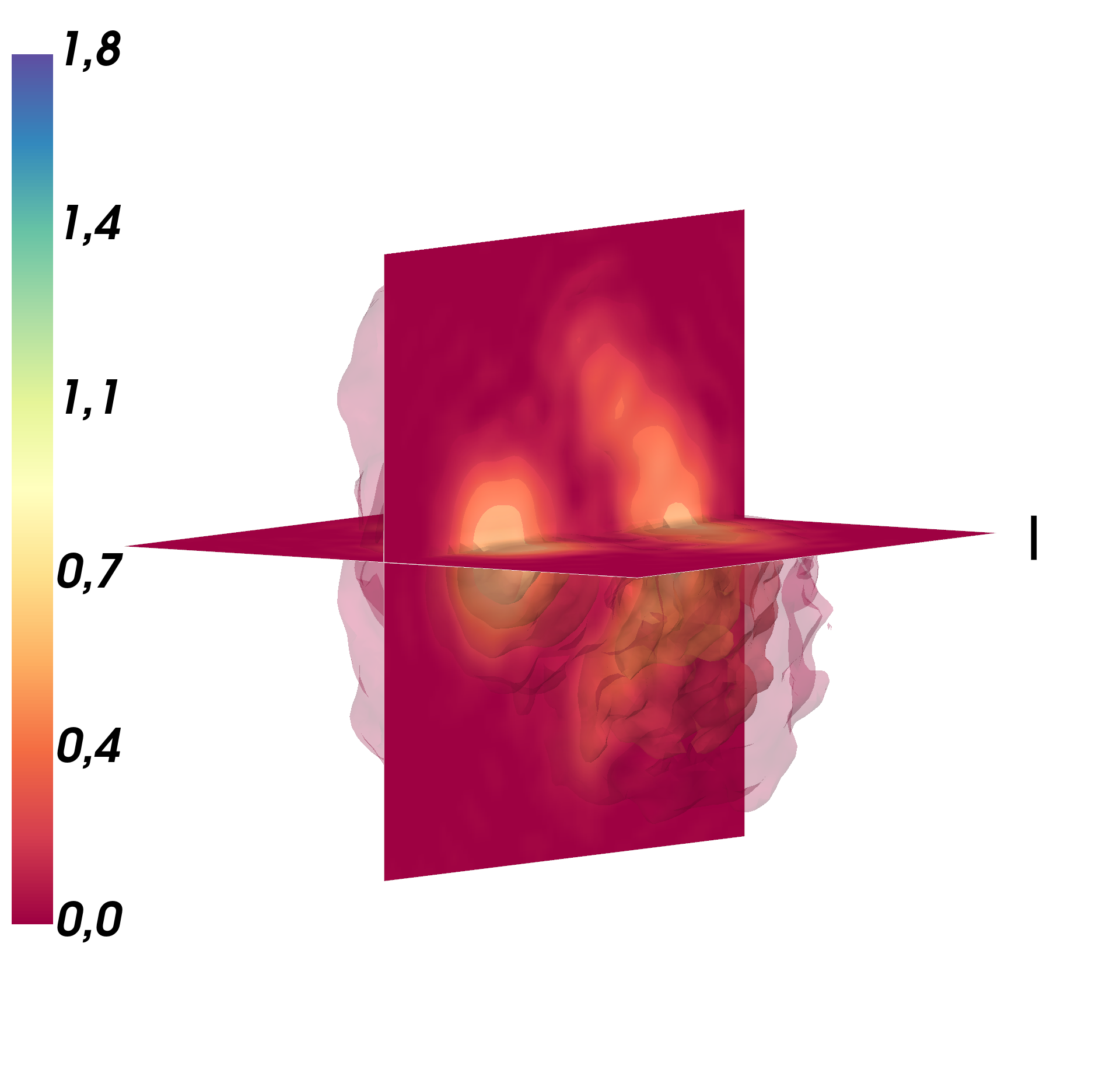}
        \caption{$N=550$ }
        \label{fig:ReconCubicSmallN}
    \end{subfigure}
    \begin{subfigure}[t]{0.32\textwidth}
        \centering
        \includegraphics[width=0.75\linewidth]{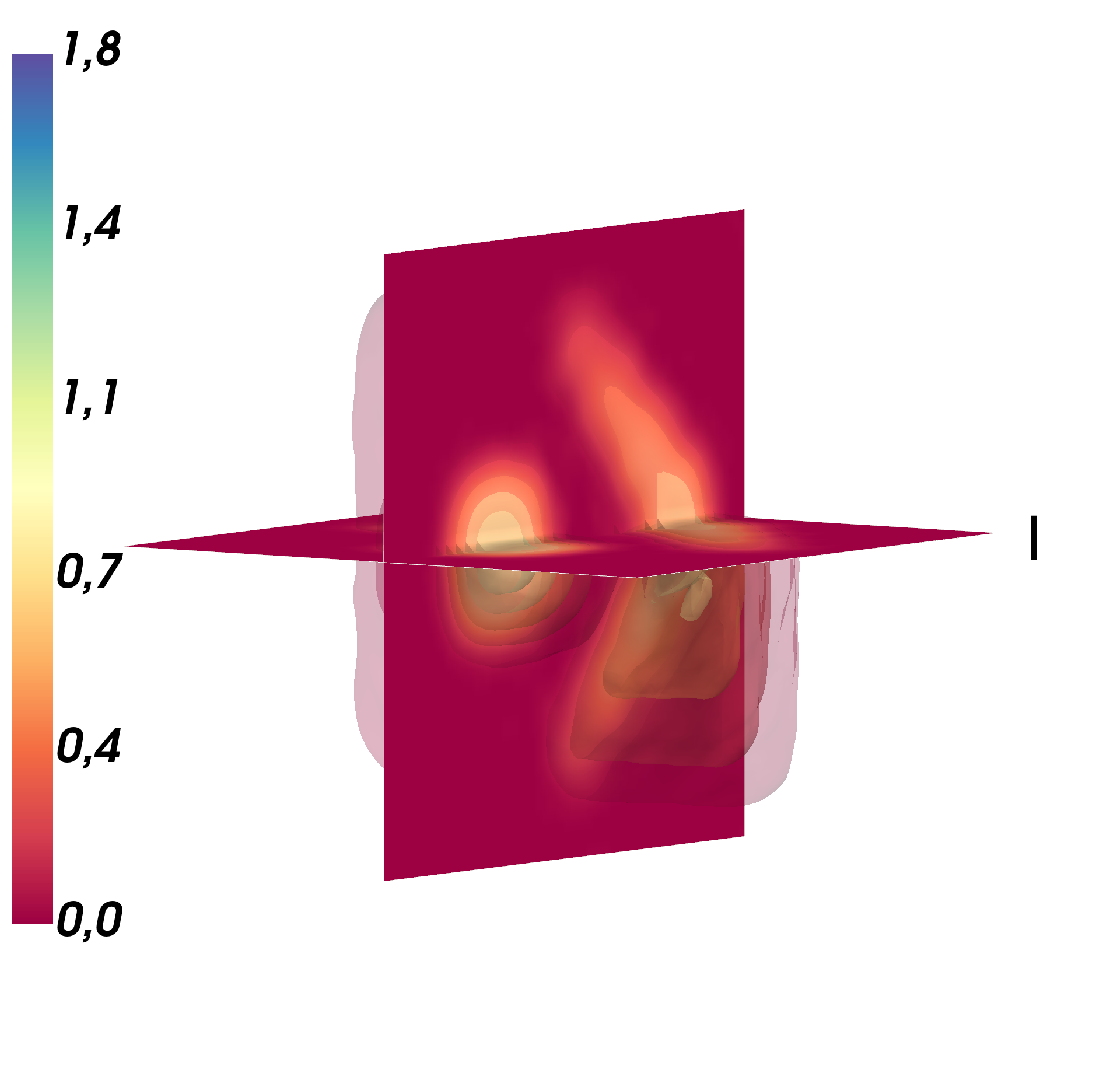}
        \caption{$N=95000$ }
        \label{fig:ReconCubicLargeN}
    \end{subfigure}
    \caption{Each row shows, on the left, the exact solution $q^\dagger$, given by multivariate splines of degree $1$ (top) and $3$ (bottom) that approximate a half-sphere and a hollow dot. The center and right panels show reconstructions for the respective exact solutions, using $\Vert \cdot\Vert_{L^2}$ as the regularization penalty, a fixed wave number $\kappa=12$, and different sample sizes $N$.}
    \label{fig:CompareSampleNumber}
\end{figure}
\begin{figure}[htb]
    \centering
        \begin{subfigure}[t]{0.32\textwidth}
            \centering
            \includegraphics[width=0.75\linewidth]{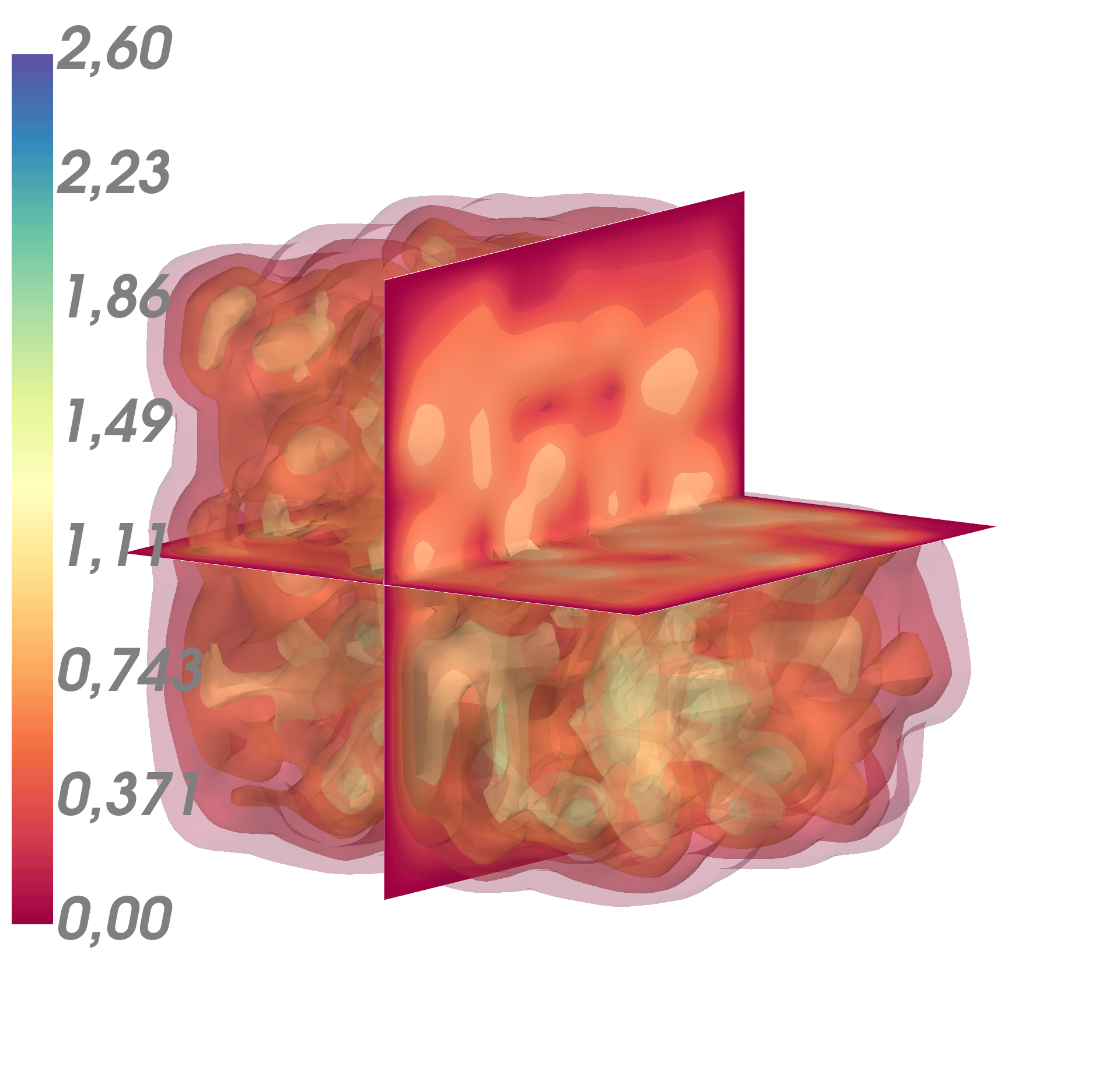}
            \caption{Exact solution $q^{\dagger}$}
            \label{fig:ExactSolCubicRand}
        \end{subfigure}
        \begin{subfigure}[t]{0.32\textwidth}
            \centering
            \includegraphics[width=0.75\linewidth]{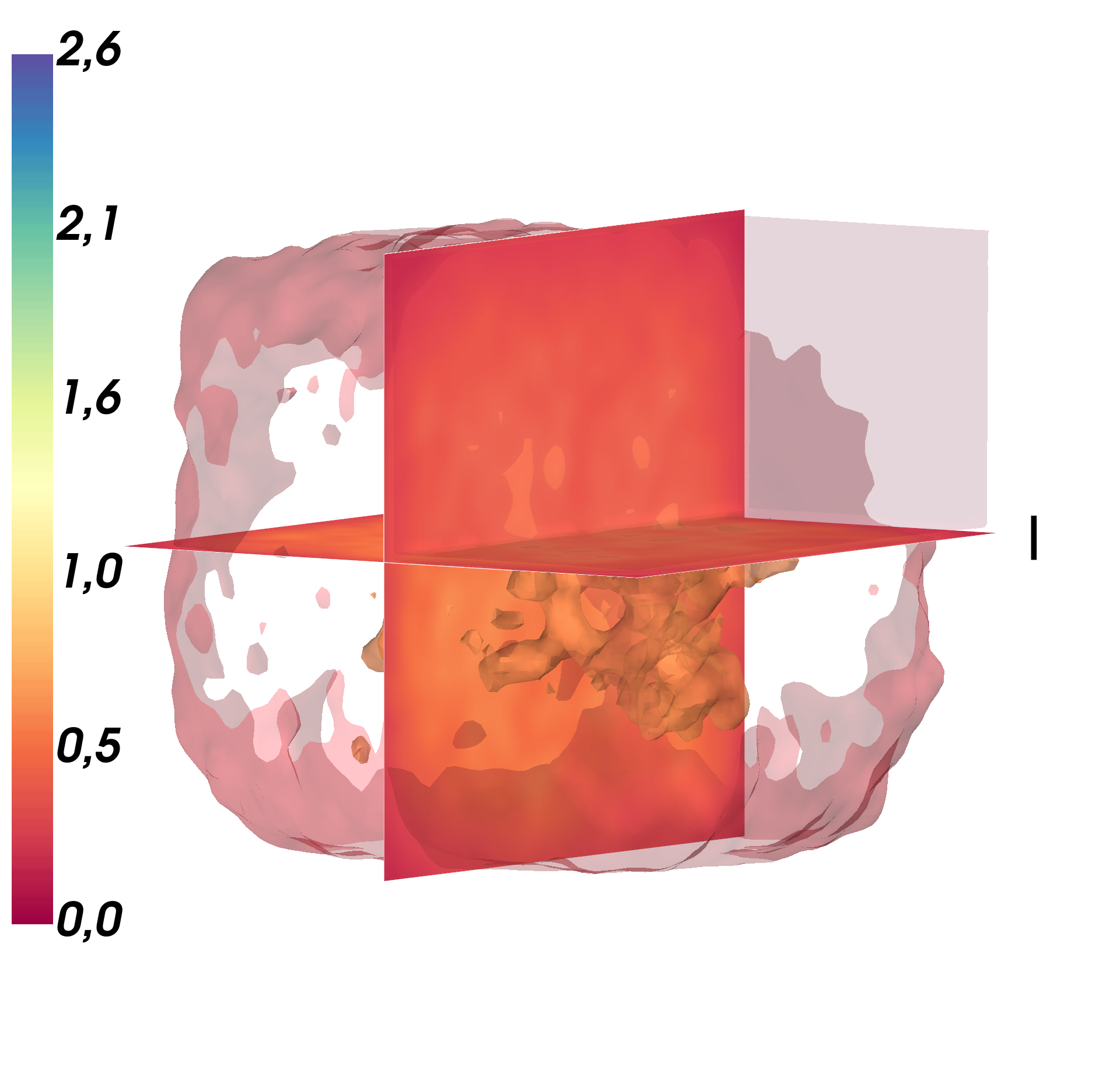}
            \caption{$N=550$}
            \label{fig:ReconLinearRandomSmallN}
        \end{subfigure}
        \begin{subfigure}[t]{0.32\textwidth}
            \centering
            \includegraphics[width=0.75\linewidth]{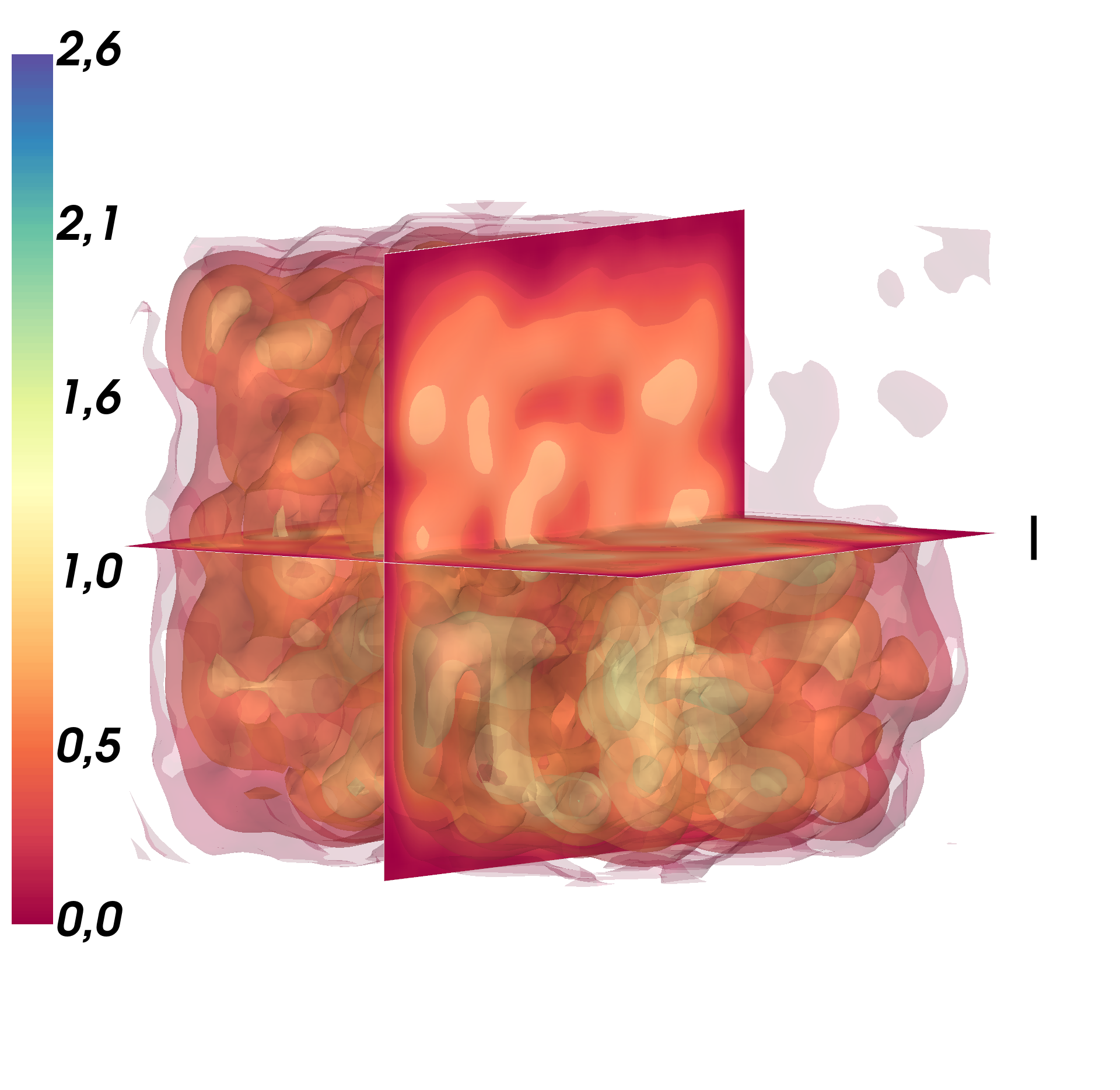}
            \caption{$N=95000$}
            \label{fig:ReconLinearRandomLargeN}
        \end{subfigure}
        
        \caption{The left panel shows the exact source strength with randomly chosen linear-spline coefficients $c^{\lambda}_{k,l,m}\sim |X^{\lambda}_{k,l,m}|$. The center and right panels show reconstructions for a fixed wave number $\kappa=12$, distance $R=4$, regularization penalty $\Vert \cdot\Vert_{L^2}$, and different sample sizes.}
        \label{fig:CompareRandomSampleNumberComparison}
\end{figure}

\input{TableOfRatesNew}
\input{PlotsBySmooth_new}
\Cref{fig:ErrorPlots} plots the error against the data-noise level in four subplots, one for each exact solution. 
As expected, the error in the stronger $H^1$-norm is larger and converges more slowly than in the $L^2$-norm for the same setup. It is clear that larger wave numbers $\kappa$ lead to steeper curves, i.e., faster convergence. 

The reconstruction errors are displayed on log--log versus log scales, so that the asymptotic slopes of the lines correspond to the exponent $p$ in the logarithmic convergence rate $C\log(3+\delta^{-2})^p$ from \cref{thm:ConvergenceRate}. 
\Cref{tab:RateLines} displays the slopes of the best-fitting straight lines in \cref{fig:ErrorPlots} as estimates of the order of convergence. 
Our theoretical upper bounds on the logarithmic convergence exponent $p$ in \cref{thm:ConvergenceRate} are shown in the last column of \cref{tab:RateLines}. 
Note the increase of this estimated order of convergence with increasing smoothness. 

We observe that the theoretical logarithmic convergence orders are upper bounds of the numerically observed 
convergence orders, but they are larger by a small factor. This may indicate that the exponents in our 
upper bounds can be improved by a small factor, and indeed we do not have any proof or heuristic argument 
of optimality. 
However, we do not consider our numerical results strong evidence that our upper bounds are suboptimal,
because an accurate numerical estimate of the optimal exponent in logarithmic stability estimates  
is challenging for several reasons. To capture the true asymptotic behavior, the data-noise level must be very small. Moreover, to ensure that other sources of error, such as discretization errors, are not dominant, very fine discretizations are required, and these are limited by the available computational resources. 
(For example, we do not know whether the slowdown of the convergence rate observed in some of the curves in \cref{fig:ErrorPlots} occurs for all curves at smaller noise levels.) 
Finally, the rates are defined 
by a supremum over noise realizations, which is difficult to approximate numerically. 

\paragraph{Wave number dependence for flat sources}
Solving the full 3D problem for wave numbers larger than $\kappa = 12$ was computationally infeasible 
given the computational resources required for fine discretizations of both the domain and the codomain. 
To investigate the dependence of the limiting behavior on the wave number, we consider a setup in which the random sources are supported only on one slice of the cube $D$. The sources can therefore be regarded as being supported in a thin layer, allowing us to discretize $f\colon D \to [0,\infty)$ on a uniform two-dimensional square grid of size $128 \times 128$. In addition, the noise model is simplified to additive Gaussian noise, which yields small noise levels without requiring an excessively large number of samples. To avoid an inverse crime, the ground truth was generated on a finer uniform grid and used to construct the exact data $\COp^\dagger$. In this setting, we can consider wave numbers in $\{7,14,28,42\}$ by adapting the degree of the spherical harmonics.

\input{SurfacePlot}
The reconstructions in \cref{fig:resultionKappa} illustrate particularly clearly how the resolution improves as the wave number increases. The convergence rates are displayed in \cref{fig:RatesSurface}, where the limiting behavior appears similar for all wave numbers. However, for higher wave numbers, the onset of this limiting behavior occurs at smaller noise levels.
\input{SurfaceRate}

%% file: TableOfRatesNew.tex
\begin{small}
\begin{table}[ht!]
    \centering
	\caption{estimated coefficients for relative error by line fitting with ansatz $e^c\log(3+\delta^{-2})^p$ and corresponding expected order $p$ of the convergence rates from \cref{thm:ConvergenceRate}}
	\label{tab:RateLines}
\begin{tabular}{|c|c||c|c|c||c||}
\hline
	\multicolumn{2}{|c||}{} & &&& rates from \\
	\multicolumn{2}{|c||}{} & $\kappa=6$ & $\kappa=9$ & $\kappa=12$ & \Cref{thm:ConvergenceRate} \\\hline
	 $X$ & $q^\dagger$ & $p$ & $p$ & $p$ & $p$\\ \hline
	\multirow{4}{*}{$H^1$} 
    & cubic spl.\ as in \cref{fig:ExactSolCubic} & -4.7817 & -6.1427 & -6.4810 & -2.5 \\\cline{2-5}
	& linear spl.\ as in \cref{fig:ExactSolLinear} & -1.3528 & -2.9504 & -3.0922 & -0.5 \\\cline{2-5}
	  & random cubic spl.         & -3.4636 & -3.8854 & -3.8625 & -2.5 \\\cline{2-5}
	  & random linear spl.        & -1.5247 & -2.1500 & -2.1333 & -0.5 \\\cline{2-5}
	\hline\hline
	\multirow{4}{*}{$L^2$}  
    & cubic spl.\ as in \cref{fig:ExactSolCubic} & -5.8884 & -6.6516 & -6.4471 & -3.5 \\\cline{2-5}
	& linear spl.\ as in \cref{fig:ExactSolLinear} & -2.4377 & -5.2667 & -5.4174 & -1.5 \\\cline{2-5}
	& random cubic spl.         & -4.6238 & -4.9408 & -5.1274 & -3.5 \\\cline{2-5}
	& random linear spl.        & -2.6023 & -3.6699 & -3.5808 & -1.5 \\\cline{2-5}
	\hline
\end{tabular}
\end{table}
\end{small}

%% file: PlotsBySmooth_new.tex
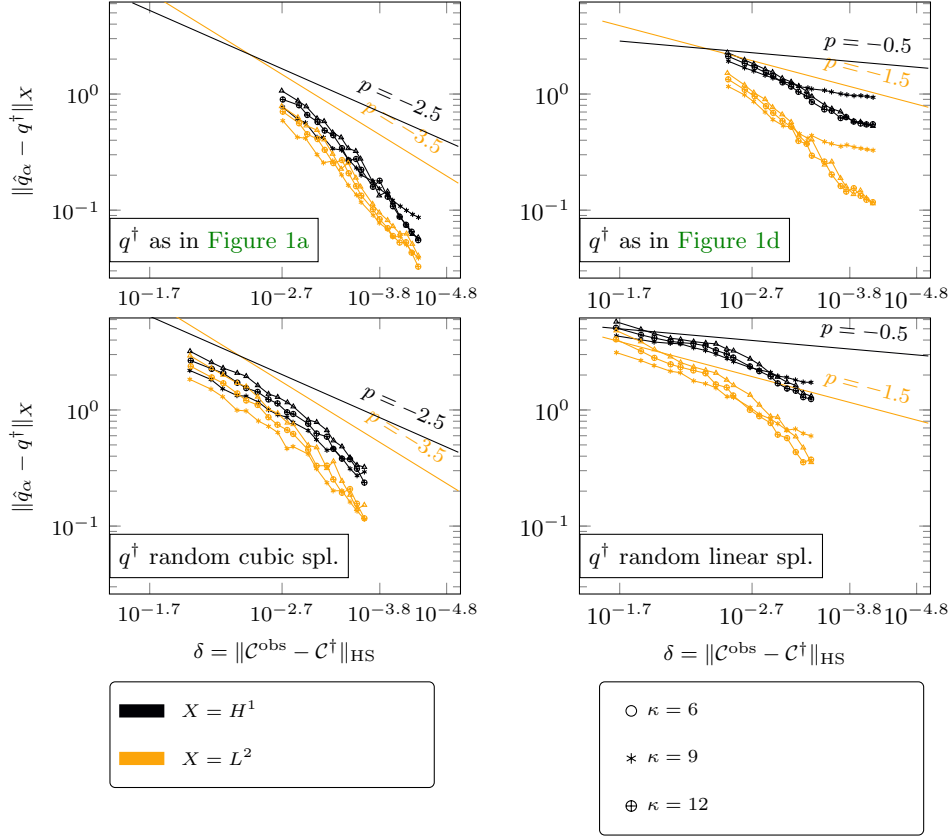
\begin{figure}

\centering
	\begin{tikzpicture}[]
		\begin{loglogaxis}[
        title={$q^{\dagger}$ as in \cref{fig:ExactSolLinear}},
        title style={font = \small,above right,at={(0,0)},draw=black,fill=white},
        width = 0.48\textwidth,
        height= 0.25\textheight,
        yshift = 0.00\textheight,
        xshift = 0.00\textwidth,
        xmin=3.40,
        xmax=11.60,
        ymin=0.02600,
        ymax=6.20000,
        xtick = {3.9143946581,6.2169797511,8.7498233534,11.0524084464},
        xticklabels={$10^{-1.7}$,$10^{-2.7}$,$10^{-3.8}$,$10^{-4.8}$
        },
        ylabel = {$\Vert\hat{q}_\alpha-q^\dagger\Vert_X$},
        ticklabel style={font=\footnotesize},
		label style={font=\footnotesize},
        ]

        \addplot [solid,color={rgb,1:red,0.001462;green,0.000466;blue,0.013866},mark=asterisk,mark options={scale =0.5,solid}] coordinates{
			 (6.2324689627,0.7725610808)
			 (6.5577517585,0.6100551749)
			 (6.7475104840,0.5648863855)
			 (7.0252033232,0.4236998829)
			 (7.1848100950,0.4226471105)
			 (7.4215544341,0.3388607774)
			 (7.6558063374,0.3290445209)
			 (7.8376787186,0.2738381691)
			 (8.0618348615,0.2307755567)
			 (8.1958337534,0.2006318350)
			 (8.5487912005,0.1772232540)
			 (8.7334565818,0.1531210563)
			 (8.9825563385,0.1361842861)
			 (9.1513621120,0.1215046249)
			 (9.3998560596,0.1081687257)
			 (9.6038960344,0.0995462383)
			 (9.8136856534,0.0919034492)
			 (10.0150981680,0.0866288122)
			};

        \addplot [solid,color={rgb,1:red,0.001462;green,0.000466;blue,0.013866},mark=oplus,mark options={scale =0.5,solid}] coordinates{
			 (6.2331168167,0.8975074728)
			 (6.5970474023,0.8002659547)
			 (6.7895366655,0.6606468359)
			 (7.0386001604,0.5745720953)
			 (7.2156990467,0.4849313416)
			 (7.4372489982,0.4414235054)
			 (7.6661956778,0.3419645179)
			 (7.8469135154,0.2682950730)
			 (8.0526374924,0.2757003095)
			 (8.1995452128,0.2225158971)
			 (8.5419705963,0.1589661705)
			 (8.7418421703,0.1793237761)
			 (8.9931134499,0.1305898073)
			 (9.1603095372,0.1080171681)
			 (9.3761968735,0.0878326778)
			 (9.5892367147,0.0748722787)
			 (9.7780638270,0.0652768306)
			 (10.0063473582,0.0551479059)
			};

        \addplot [solid,color={rgb,1:red,0.001462;green,0.000466;blue,0.013866},mark=triangle,mark options={scale =0.5,solid}] coordinates{
			 (6.2092055840,1.0669245964)
			 (6.5710059688,0.8746932987)
			 (6.7660797560,0.7814210386)
			 (7.0253502838,0.6168618753)
			 (7.2012314879,0.5825295304)
			 (7.4201385643,0.4660156448)
			 (7.6432810555,0.4227372016)
			 (7.8352219521,0.3407653453)
			 (8.0453944354,0.3222155344)
			 (8.1776954516,0.2771190099)
			 (8.5309371653,0.1758470036)
			 (8.7310334133,0.1334451356)
			 (8.9906561728,0.1448969114)
			 (9.1612005546,0.1167692265)
			 (9.3867943202,0.0878630932)
			 (9.6061507215,0.0737196838)
			 (9.7994572423,0.0621671519)
			 (10.0063359216,0.0581526107)
			};

        \addplot [solid,color={rgb,1:red,0.987622;green,0.64532;blue,0.039886},mark=asterisk,mark options={scale =0.5,solid}] coordinates{
			 (6.2324689627,0.5897334801)
			 (6.5577517585,0.4248481882)
			 (6.7475104840,0.4164718984)
			 (7.0252033232,0.3009355572)
			 (7.1848100950,0.2554870261)
			 (7.4215544341,0.2653652129)
			 (7.6558063374,0.2014523832)
			 (7.8376787186,0.1637851147)
			 (8.0618348615,0.1355740193)
			 (8.1958337534,0.1171072320)
			 (8.5487912005,0.0905014897)
			 (8.7334565818,0.0775146551)
			 (8.9825563385,0.0687185729)
			 (9.1513621120,0.0589623640)
			 (9.3998560596,0.0592936520)
			 (9.6038960344,0.0504020816)
			 (9.8136856534,0.0431238819)
			 (10.0150981680,0.0387616263)
			};

        \addplot [solid,color={rgb,1:red,0.987622;green,0.64532;blue,0.039886},mark=oplus,mark options={scale =0.5,solid}] coordinates{
			 (6.2331168167,0.7051265742)
			 (6.5970474023,0.5599484283)
			 (6.7895366655,0.4508867838)
			 (7.0386001604,0.4094335997)
			 (7.2156990467,0.3305188199)
			 (7.4372489982,0.2543294121)
			 (7.6661956778,0.2697664483)
			 (7.8469135154,0.2078408535)
			 (8.0526374924,0.1608582373)
			 (8.1995452128,0.1334270183)
			 (8.5419705963,0.1017061502)
			 (8.7418421703,0.0837586794)
			 (8.9931134499,0.0699535665)
			 (9.1603095372,0.0599106579)
			 (9.3761968735,0.0526185307)
			 (9.5892367147,0.0528605215)
			 (9.7780638270,0.0430820582)
			 (10.0063473582,0.0326294478)
			};

        \addplot [solid,color={rgb,1:red,0.987622;green,0.64532;blue,0.039886},mark=triangle,mark options={scale =0.5,solid}] coordinates{
			 (6.2092055840,0.7842945470)
			 (6.5710059688,0.6299054414)
			 (6.7660797560,0.5346716782)
			 (7.0253502838,0.4886692284)
			 (7.2012314879,0.4054827162)
			 (7.4201385643,0.3048621790)
			 (7.6432810555,0.2266249628)
			 (7.8352219521,0.2552848842)
			 (8.0453944354,0.1900458614)
			 (8.1776954516,0.1593139895)
			 (8.5309371653,0.1130116545)
			 (8.7310334133,0.0952670246)
			 (8.9906561728,0.0818555068)
			 (9.1612005546,0.0726230647)
			 (9.3867943202,0.0627184096)
			 (9.6061507215,0.0626430074)
			 (9.7994572423,0.0511414013)
			 (10.0063359216,0.0405498549)
			};

		\addplot [solid,color={rgb,1:red,0.987622;green,0.64532;blue,0.039886}] coordinates{
			(3.6841361488,9.2008954613)
			(4.5540016284,4.3849377059)
			(5.4238671079,2.3785111908)
			(6.2937325875,1.4132362734)
			(7.1635980671,0.8983293462)
			(8.0334635467,0.6014992493)
			(8.9033290262,0.4197218024)
			(9.7731945058,0.3028775732)
			(10.6430599854,0.2247306099)
			(11.5129254650,0.1707045470)
			} % p=3.5
			node[pos=0.8, sloped,above,text={rgb,1:red,0.987622;green,0.64532;blue,0.039886}] {\footnotesize $p=-3.5$};
		\addplot [solid,color={rgb,1:red,0.001462;green,0.000466;blue,0.013866}] coordinates{
			(3.6841361488,6.1030892516)
			(4.5540016284,3.5945535508)
			(5.4238671079,2.3221436400)
			(6.2937325875,1.6010169167)
			(7.1635980671,1.1583488117)
			(8.0334635467,0.8697820297)
			(8.9033290262,0.6726458373)
			(9.7731945058,0.5328146584)
			(10.6430599854,0.4305278452)
			(11.5129254650,0.3537555708)
			}% p=2.5
			node[pos=0.8, sloped,above,text={rgb,1:red,0.001462;green,0.000466;blue,0.013866}] {\footnotesize $p=-2.5$};
		
		\end{loglogaxis}
		\begin{loglogaxis}[
        title={$q^{\dagger}$ as in \cref{fig:ExactSolCubic}},
        title style={font = \small,above right,at={(0,0)},draw=black,fill=white},
        width = 0.48\textwidth,
        height= 0.25\textheight,
        yshift = 0.00\textheight,
        xshift = 0.48\textwidth,
        xmin=3.40,
        xmax=11.60,
        ymin=0.02600,
        ymax=6.20000,
        xtick = {3.9143946581,6.2169797511,8.7498233534,11.0524084464},
        xticklabels={$10^{-1.7}$,$10^{-2.7}$,$10^{-3.8}$,$10^{-4.8}$
        },
		label style={font=\footnotesize},
        ]

        \addplot [solid,color={rgb,1:red,0.001462;green,0.000466;blue,0.013866},mark=asterisk,mark options={scale =0.5,solid}] coordinates{
			 (5.7227536296,1.9179428148)
			 (6.0551662832,1.6814574411)
			 (6.2536075939,1.5788853763)
			 (6.5067013171,1.4508460503)
			 (6.6861419321,1.3746940729)
			 (6.9335671446,1.2753152385)
			 (7.1419725755,1.2330163204)
			 (7.3363485867,1.1558833863)
			 (7.5575688321,1.1312823804)
			 (7.6568575360,1.1056439399)
			 (8.0096301771,1.0759240430)
			 (8.2072808029,1.0435416885)
			 (8.4797102068,1.0035759594)
			 (8.6590893450,0.9854539606)
			 (8.8799488127,0.9669974239)
			 (9.0951949043,0.9683944864)
			 (9.2948316676,0.9530818679)
			 (9.4924778427,0.9361900906)
			};

        \addplot [solid,color={rgb,1:red,0.001462;green,0.000466;blue,0.013866},mark=oplus,mark options={scale =0.5,solid}] coordinates{
			 (5.7196367347,2.1125522389)
			 (6.0605291105,1.8344872937)
			 (6.2737497971,1.7024984682)
			 (6.5277188130,1.5218796103)
			 (6.6953115574,1.4238825828)
			 (6.9151361913,1.2598544037)
			 (7.1268643318,1.1595730549)
			 (7.3126264800,1.0449539766)
			 (7.5241432163,0.9607219018)
			 (7.6627727219,0.8551398247)
			 (8.0043339112,0.7374713292)
			 (8.2106273207,0.7174744330)
			 (8.4729601708,0.6209635868)
			 (8.6402103218,0.6320102300)
			 (8.8710862516,0.5793901021)
			 (9.0851182369,0.5574951868)
			 (9.2738675772,0.5473230496)
			 (9.4825130851,0.5491937159)
			};

        \addplot [solid,color={rgb,1:red,0.001462;green,0.000466;blue,0.013866},mark=triangle,mark options={scale =0.5,solid}] coordinates{
			 (5.7028398499,2.2763935532)
			 (6.0543220129,1.9679154228)
			 (6.2545538645,1.8127673203)
			 (6.5065704074,1.6348222931)
			 (6.6789773149,1.5414685646)
			 (6.9070652077,1.3674287698)
			 (7.1270441012,1.2715212732)
			 (7.3204109587,1.1705833469)
			 (7.5259313265,1.0549311784)
			 (7.6635189427,0.9540543227)
			 (8.0104057277,0.8047261478)
			 (8.2150274105,0.7476722205)
			 (8.4699018069,0.7051102645)
			 (8.6351533529,0.6354180369)
			 (8.8633763700,0.5820164814)
			 (9.0821174645,0.5744721663)
			 (9.2710062217,0.5462876459)
			 (9.4859505435,0.5301083650)
			};

        \addplot [solid,color={rgb,1:red,0.987622;green,0.64532;blue,0.039886},mark=asterisk,mark options={scale =0.5,solid}] coordinates{
			 (5.7227536296,1.1607190626)
			 (6.0551662832,0.9826752405)
			 (6.2536075939,0.8607206707)
			 (6.5067013171,0.6996378283)
			 (6.6861419321,0.6023499293)
			 (6.9335671446,0.5327339646)
			 (7.1419725755,0.5179638708)
			 (7.3363485867,0.4552657848)
			 (7.5575688321,0.4160228509)
			 (7.6568575360,0.4396592939)
			 (8.0096301771,0.3882839714)
			 (8.2072808029,0.3731151024)
			 (8.4797102068,0.3548962560)
			 (8.6590893450,0.3641872968)
			 (8.8799488127,0.3493205283)
			 (9.0951949043,0.3405533186)
			 (9.2948316676,0.3333951258)
			 (9.4924778427,0.3278130532)
			};

        \addplot [solid,color={rgb,1:red,0.987622;green,0.64532;blue,0.039886},mark=oplus,mark options={scale =0.5,solid}] coordinates{
			 (5.7196367347,1.3377203425)
			 (6.0605291105,1.0880157176)
			 (6.2737497971,0.9551290648)
			 (6.5277188130,0.7861728215)
			 (6.6953115574,0.6871606180)
			 (6.9151361913,0.5865707404)
			 (7.1268643318,0.5235261819)
			 (7.3126264800,0.3946080241)
			 (7.5241432163,0.3725566561)
			 (7.6627727219,0.2946991351)
			 (8.0043339112,0.2620684359)
			 (8.2106273207,0.2026892646)
			 (8.4729601708,0.1614026465)
			 (8.6402103218,0.1444931846)
			 (8.8710862516,0.1545644455)
			 (9.0851182369,0.1328344455)
			 (9.2738675772,0.1228657034)
			 (9.4825130851,0.1171838534)
			};

        \addplot [solid,color={rgb,1:red,0.987622;green,0.64532;blue,0.039886},mark=triangle,mark options={scale =0.5,solid}] coordinates{
			 (5.7028398499,1.5147888091)
			 (6.0543220129,1.1973028373)
			 (6.2545538645,1.0280928605)
			 (6.5065704074,0.9006598498)
			 (6.6789773149,0.7694207601)
			 (6.9070652077,0.6668045623)
			 (7.1270441012,0.5729790850)
			 (7.3204109587,0.4384454712)
			 (7.5259313265,0.3792523448)
			 (7.6635189427,0.4050544035)
			 (8.0104057277,0.2461613105)
			 (8.2150274105,0.2610930224)
			 (8.4699018069,0.1886158917)
			 (8.6351533529,0.1599769322)
			 (8.8633763700,0.1370696097)
			 (9.0821174645,0.1476421300)
			 (9.2710062217,0.1266052590)
			 (9.4859505435,0.1141392530)
			};

		\addplot [solid,color={rgb,1:red,0.987622;green,0.64532;blue,0.039886}] coordinates{
			(3.6841361488,4.2481833526)
			(4.5540016284,3.0921488715)
			(5.4238671079,2.3790680696)
			(6.2937325875,1.9033163246)
			(7.1635980671,1.5673898662)
			(8.0334635467,1.3198351137)
			(8.9033290262,1.1312153649)
			(9.7731945058,0.9836013557)
			(10.6430599854,0.8655141399)
			(11.5129254650,0.7692993981)
			}
			node[pos=0.8, sloped,above,text={rgb,1:red,0.987622;green,0.64532;blue,0.039886}] {\footnotesize $p=-1.5$};% p=1.5
		\addplot [solid,color={rgb,1:red,0.001462;green,0.000466;blue,0.013866}] coordinates{
			(3.914395,2.858970)
			(4.758676,2.593146)
			(5.602957,2.389822)
			(6.447238,2.227861)
			(7.291519,2.094913)
			(8.135801,1.983238)
			(8.980082,1.887708)
			(9.824363,1.804774)
			(10.668644,1.731890)
			(11.512925,1.667179)
			}
			node[pos=0.8, sloped,above,text={rgb,1:red,0.001462;green,0.000466;blue,0.013866}] {\footnotesize $p=-0.5$};%p=0.5
		
		\end{loglogaxis}
		\begin{loglogaxis}[
        title={$q^{\dagger}$ random cubic spl.},
        title style={font = \small,above right,at={(0,0)},draw=black,fill=white},
        width = 0.48\textwidth,
        height= 0.25\textheight,
        yshift = -0.20\textheight,
        xshift = 0.00\textwidth,
        xmin=3.40,
        xmax=11.60,
        ymin=0.02600,
        ymax=6.20000,
        xtick = {3.9143946581,6.2169797511,8.7498233534,11.0524084464},
        xticklabels={$10^{-1.7}$,$10^{-2.7}$,$10^{-3.8}$,$10^{-4.8}$
        },
        xlabel = {$\delta=\Vert \COp^{\obs}-\COp^\dagger\Vert_{\HS}$},
        ylabel = {$\Vert\hat{q}_\alpha-q^\dagger\Vert_X$},
        ticklabel style={font=\footnotesize},
		label style={font=\footnotesize},
        ]

        \addplot [solid,color={rgb,1:red,0.001462;green,0.000466;blue,0.013866},mark=asterisk,mark options={scale =0.5,solid}] coordinates{
			 (4.5025152231,2.1890655742)
			 (4.8483991030,1.8300491256)
			 (5.0506505716,1.5220638011)
			 (5.3096216625,1.3306924678)
			 (5.4820026531,1.3138845164)
			 (5.7230885617,1.1699386526)
			 (5.9306630502,1.0012488779)
			 (6.1245052105,0.9102601074)
			 (6.3235734662,0.8659887804)
			 (6.4599285771,0.7821815350)
			 (6.8094977113,0.6661296355)
			 (7.0099167072,0.5564942591)
			 (7.2728563927,0.4500913681)
			 (7.4322817291,0.4663149318)
			 (7.6639564098,0.3865016804)
			 (7.8862701179,0.3116725842)
			 (8.0803150955,0.2719198465)
			 (8.2902076560,0.2930676748)
			};

        \addplot [solid,color={rgb,1:red,0.001462;green,0.000466;blue,0.013866},mark=oplus,mark options={scale =0.5,solid}] coordinates{
			 (4.5176066996,2.6627135527)
			 (4.8611171754,2.2604826672)
			 (5.0626772170,2.0725163382)
			 (5.3160945646,1.7191703834)
			 (5.4837288193,1.5431606646)
			 (5.7075150316,1.4310946482)
			 (5.9280327717,1.2323715894)
			 (6.1180733725,1.1384820106)
			 (6.3283139075,0.9549006337)
			 (6.4644992610,0.9230758952)
			 (6.8111387217,0.7606509454)
			 (7.0140328893,0.6268246250)
			 (7.2724499494,0.5889054794)
			 (7.4385418771,0.4622952305)
			 (7.6652140540,0.3810403545)
			 (7.8839800047,0.3767257383)
			 (8.0761982822,0.3116165430)
			 (8.2898884693,0.2368544509)
			};

        \addplot [solid,color={rgb,1:red,0.001462;green,0.000466;blue,0.013866},mark=triangle,mark options={scale =0.5,solid}] coordinates{
			 (4.5060641491,3.2036888161)
			 (4.8559981811,2.5707816126)
			 (5.0577262865,2.2968842502)
			 (5.3153682418,2.0819117610)
			 (5.4844143909,1.9533271157)
			 (5.7098444641,1.6288974086)
			 (5.9290826467,1.3739574699)
			 (6.1230807047,1.2918729780)
			 (6.3337111427,1.1616916550)
			 (6.4660257823,1.0697244126)
			 (6.8123642793,0.8206518979)
			 (7.0170473593,0.7851346501)
			 (7.2724365150,0.6675214908)
			 (7.4408836898,0.5451644987)
			 (7.6661721636,0.4841182128)
			 (7.8849571693,0.3894500129)
			 (8.0779008753,0.3350003553)
			 (8.2881354342,0.3226351929)
			};

        \addplot [solid,color={rgb,1:red,0.987622;green,0.64532;blue,0.039886},mark=asterisk,mark options={scale =0.5,solid}] coordinates{
			 (4.5025152231,1.8354033888)
			 (4.8483991030,1.5118144580)
			 (5.0506505716,1.2994316401)
			 (5.3096216625,0.9956339186)
			 (5.4820026531,0.9813425935)
			 (5.7230885617,0.8034700942)
			 (5.9306630502,0.7223447401)
			 (6.1245052105,0.6395740532)
			 (6.3235734662,0.4651597575)
			 (6.4599285771,0.4840580910)
			 (6.8094977113,0.4170337574)
			 (7.0099167072,0.3122819067)
			 (7.2728563927,0.2363974418)
			 (7.4322817291,0.2013470457)
			 (7.6639564098,0.2017196303)
			 (7.8862701179,0.1612983221)
			 (8.0803150955,0.1347622210)
			 (8.2902076560,0.1147975434)
			};

        \addplot [solid,color={rgb,1:red,0.987622;green,0.64532;blue,0.039886},mark=oplus,mark options={scale =0.5,solid}] coordinates{
			 (4.5176066996,2.3778314505)
			 (4.8611171754,1.9069798281)
			 (5.0626772170,1.6967061235)
			 (5.3160945646,1.3866231365)
			 (5.4837288193,1.2070399157)
			 (5.7075150316,1.1044701447)
			 (5.9280327717,0.8659212217)
			 (6.1180733725,0.7464002338)
			 (6.3283139075,0.6719431269)
			 (6.4644992610,0.6211484268)
			 (6.8111387217,0.4530111946)
			 (7.0140328893,0.3303932956)
			 (7.2724499494,0.3317060822)
			 (7.4385418771,0.2523085654)
			 (7.6652140540,0.1937038403)
			 (7.8839800047,0.2084388546)
			 (8.0761982822,0.1560966959)
			 (8.2898884693,0.1166915099)
			};

        \addplot [solid,color={rgb,1:red,0.987622;green,0.64532;blue,0.039886},mark=triangle,mark options={scale =0.5,solid}] coordinates{
			 (4.5060641491,2.9354834148)
			 (4.8559981811,2.2839331811)
			 (5.0577262865,2.0045532908)
			 (5.3153682418,1.7041631288)
			 (5.4844143909,1.5918357098)
			 (5.7098444641,1.2984356559)
			 (5.9290826467,1.0169245011)
			 (6.1230807047,0.9367501319)
			 (6.3337111427,0.7761215932)
			 (6.4660257823,0.7525971299)
			 (6.8123642793,0.4965414502)
			 (7.0170473593,0.4752751513)
			 (7.2724365150,0.3148512160)
			 (7.4408836898,0.3608306510)
			 (7.6661721636,0.2445247056)
			 (7.8849571693,0.1855736719)
			 (8.0779008753,0.1400182240)
			 (8.2881354342,0.1518733087)
			};

		\addplot [solid,color={rgb,1:red,0.987622;green,0.64532;blue,0.039886}] coordinates{
			(3.914395,8.784543)
			(4.758676,4.436499)
			(5.602957,2.505019)
			(6.447238,1.532739)
			(7.291519,0.996354)
			(8.135801,0.679010)
			(8.980082,0.480614)
			(9.824363,0.350923)
			(10.668644,0.262963)
			(11.512925,0.201431)
			}% p=3.5
			node[pos=0.8, sloped,above,text={rgb,1:red,0.987622;green,0.64532;blue,0.039886}] {\footnotesize $p=-3.5$};
		\addplot [solid,color={rgb,1:red,0.001462;green,0.000466;blue,0.013866}] coordinates{
			(3.914395,6.411958)
			(4.758676,3.936201)
			(5.602957,2.616800)
			(6.447238,1.842395)
			(7.291519,1.354480)
			(8.135801,1.029952)
			(8.980082,0.804668)
			(9.824363,0.642772)
			(10.668644,0.523052)
			(11.512925,0.432368)
			}% p=2.5
			node[pos=0.8, sloped,above,text={rgb,1:red,0.001462;green,0.000466;blue,0.013866}] {\footnotesize $p=-2.5$};
		
		\end{loglogaxis}
		\begin{loglogaxis}[
        title={$q^{\dagger}$ random linear spl.},
        title style={font = \small,above right,at={(0,0)},draw=black,fill=white},
        width = 0.48\textwidth,
        height= 0.25\textheight,
        yshift = -0.20\textheight,
        xshift = 0.48\textwidth,
        xmin=3.40,
        xmax=11.60,
        ymin=0.02600,
        ymax=6.20000,
        xtick = {3.9143946581,6.2169797511,8.7498233534,11.0524084464},
        xticklabels={$10^{-1.7}$,$10^{-2.7}$,$10^{-3.8}$,$10^{-4.8}$
        },
        xlabel = {$\delta=\Vert \COp^{\obs}-\COp^\dagger\Vert_{\HS}$},
        label style={font=\footnotesize},
        ]

        \addplot [solid,color={rgb,1:red,0.001462;green,0.000466;blue,0.013866},mark=asterisk,mark options={scale =0.5,solid}] coordinates{
			 (3.8687627495,4.3440429381)
			 (4.2143108795,4.0258604363)
			 (4.4137892978,3.8483503342)
			 (4.6701522997,3.7488756933)
			 (4.8374879084,3.7138865165)
			 (5.0683659859,3.4403916606)
			 (5.2830698761,3.2503597954)
			 (5.4792235119,3.0260005836)
			 (5.6969688693,2.7743352135)
			 (5.8271316661,2.6115218455)
			 (6.1752651765,2.3188133142)
			 (6.3750501555,2.2192133639)
			 (6.6313167725,2.1033385651)
			 (6.8020116004,1.9568276721)
			 (7.0176157940,1.9102184596)
			 (7.2361178854,1.8135388142)
			 (7.4341310986,1.7367916141)
			 (7.6410824461,1.7277266606)
			};

        \addplot [solid,color={rgb,1:red,0.001462;green,0.000466;blue,0.013866},mark=oplus,mark options={scale =0.5,solid}] coordinates{
			 (3.8634703566,5.1012938144)
			 (4.2159391798,4.4315474722)
			 (4.4184794466,4.1621718162)
			 (4.6760994750,3.8999547133)
			 (4.8407370737,3.8070555259)
			 (5.0638627383,3.7285732134)
			 (5.2820743794,3.5705842484)
			 (5.4777701483,3.2670423761)
			 (5.6887911213,3.0337284098)
			 (5.8207139245,2.8283673139)
			 (6.1695679633,2.3724717546)
			 (6.3754732853,2.1586902062)
			 (6.6334502424,1.9455314860)
			 (6.8009826160,1.6945767021)
			 (7.0261591009,1.5294017098)
			 (7.2428549596,1.4634501294)
			 (7.4340867914,1.2884303544)
			 (7.6418243606,1.2413809968)
			};

        \addplot [solid,color={rgb,1:red,0.001462;green,0.000466;blue,0.013866},mark=triangle,mark options={scale =0.5,solid}] coordinates{
			 (3.8665445464,5.7489343457)
			 (4.2140056037,4.9421478353)
			 (4.4192938481,4.5389610632)
			 (4.6734315727,4.1466405868)
			 (4.8424446214,4.0484346773)
			 (5.0680532086,3.8799263015)
			 (5.2867342292,3.7741308289)
			 (5.4803702440,3.7090615086)
			 (5.6901671445,3.3510025218)
			 (5.8267755059,3.2369506447)
			 (6.1739370069,2.7563937710)
			 (6.3758887794,2.4656758831)
			 (6.6324537471,2.0664785084)
			 (6.7991247403,1.9256336025)
			 (7.0238022293,1.7356338609)
			 (7.2434229833,1.6274468411)
			 (7.4356497076,1.3923188298)
			 (7.6444203369,1.3138547505)
			};

        \addplot [solid,color={rgb,1:red,0.987622;green,0.64532;blue,0.039886},mark=asterisk,mark options={scale =0.5,solid}] coordinates{
			 (3.8687627495,3.1209272972)
			 (4.2143108795,2.6680614639)
			 (4.4137892978,2.4120097530)
			 (4.6701522997,2.1589067097)
			 (4.8374879084,2.0934380293)
			 (5.0683659859,1.7803246547)
			 (5.2830698761,1.6844096705)
			 (5.4792235119,1.5513390065)
			 (5.6969688693,1.3024863963)
			 (5.8271316661,1.2829605834)
			 (6.1752651765,1.0507680561)
			 (6.3750501555,0.9183203074)
			 (6.6313167725,0.8372603094)
			 (6.8020116004,0.7619794947)
			 (7.0176157940,0.6708444564)
			 (7.2361178854,0.6862343790)
			 (7.4341310986,0.6282296332)
			 (7.6410824461,0.5973273578)
			};

        \addplot [solid,color={rgb,1:red,0.987622;green,0.64532;blue,0.039886},mark=oplus,mark options={scale =0.5,solid}] coordinates{
			 (3.8634703566,4.0566694526)
			 (4.2159391798,3.2118001536)
			 (4.4184794466,2.8089939808)
			 (4.6760994750,2.4720432223)
			 (4.8407370737,2.3357048053)
			 (5.0638627383,2.1908252206)
			 (5.2820743794,2.0699895995)
			 (5.4777701483,1.6597490861)
			 (5.6887911213,1.5617722167)
			 (5.8207139245,1.3324889107)
			 (6.1695679633,1.0017062431)
			 (6.3754732853,0.8835568915)
			 (6.6334502424,0.7719819224)
			 (6.8009826160,0.6130467475)
			 (7.0261591009,0.5720604946)
			 (7.2428549596,0.4340541372)
			 (7.4340867914,0.3545162408)
			 (7.6418243606,0.3737746740)
			};

        \addplot [solid,color={rgb,1:red,0.987622;green,0.64532;blue,0.039886},mark=triangle,mark options={scale =0.5,solid}] coordinates{
			 (3.8665445464,4.8318025607)
			 (4.2140056037,3.8831746718)
			 (4.4192938481,3.3190683719)
			 (4.6734315727,2.8786049313)
			 (4.8424446214,2.5641139161)
			 (5.0680532086,2.3486516749)
			 (5.2867342292,2.1966855205)
			 (5.4803702440,2.0814380392)
			 (5.6901671445,1.8150753704)
			 (5.8267755059,1.6309966927)
			 (6.1739370069,1.3465611341)
			 (6.3758887794,1.1074820826)
			 (6.6324537471,0.8814372412)
			 (6.7991247403,0.8665144186)
			 (7.0238022293,0.6682323833)
			 (7.2434229833,0.5894080085)
			 (7.4356497076,0.4733910105)
			 (7.6444203369,0.3544442658)
			};

		\addplot [solid,color={rgb,1:red,0.987622;green,0.64532;blue,0.039886}] coordinates{
			(3.6841361488,4.2481833526)
			(4.5540016284,3.0921488715)
			(5.4238671079,2.3790680696)
			(6.2937325875,1.9033163246)
			(7.1635980671,1.5673898662)
			(8.0334635467,1.3198351137)
			(8.9033290262,1.1312153649)
			(9.7731945058,0.9836013557)
			(10.6430599854,0.8655141399)
			(11.5129254650,0.7692993981)
			}
			node[pos=0.8, sloped,above,text={rgb,1:red,0.987622;green,0.64532;blue,0.039886}] {\footnotesize $p=-1.5$};% p=1.5
		
		\addplot [solid,color={rgb,1:red,0.001462;green,0.000466;blue,0.013866}] coordinates{
			(3.6841361488,5.1569091211)
			(4.5540016284,4.6388307290)
			(5.4238671079,4.2506696079)
			(6.2937325875,3.9460148741)
			(7.1635980671,3.6986855047)
			(8.0334635467,3.4927026999)
			(8.9033290262,3.3176978059)
			(9.7731945058,3.1666113689)
			(10.6430599854,3.0344485821)
			(11.5129254650,2.9175626549)
			}
			node[pos=0.8, sloped,above,text={rgb,1:red,0.001462;green,0.000466;blue,0.013866}] {\footnotesize $p=-0.5$};%p=0.5

		\end{loglogaxis}
        \begin{axis}[
			 xshift = 0\textwidth,
			 yshift = -0.38\textheight,
			 width=0.25\textwidth,
			 height=0.2\textheight,
			 hide axis,
			 xmin=10,
			 xmax=11,
			 ymin=0,
			 ymax=0.4,
			 legend columns=1,
			 legend style={font=\scriptsize, text width=0.25\textwidth, minimum height=1.5\baselineskip, below right, at={(0,1)}, inner xsep=1pt, style={column sep=2pt},legend cell align=center}
			]
			\addlegendimage{color={rgb,1:red,0.001462;green,0.000466;blue,0.013866},line width=5 pt}
			\addlegendentry{$X=H^1$};
			\addlegendimage{color={rgb,1:red,0.987622;green,0.64532;blue,0.039886},line width=5 pt}
			\addlegendentry{$X=L^2$};
		\end{axis}
        \begin{axis}[
			 xshift = 0.5\textwidth,
			 yshift = -0.38\textheight,
			 width=0.25\textwidth,
			 height=0.2\textheight,
			 hide axis,
			 xmin=10,
			 xmax=11,
			 ymin=0,
			 ymax=0.4,
			 legend columns=1,
			 legend style={font=\scriptsize, text width=0.25\textwidth, minimum height=1.5\baselineskip, below right, at={(0,1)}, inner xsep=10pt, style={column sep=2pt},legend cell align=center}
			]
			\addlegendimage{color=black, only marks, mark=o}
			\addlegendentry{$\kappa=6$};
			\addlegendimage{color=black, only marks, mark=asterisk}
			\addlegendentry{$\kappa=9$};
			\addlegendimage{color=black, only marks, mark=oplus}
			\addlegendentry{$\kappa=12$};
		\end{axis}
		\end{tikzpicture}
\caption{Each of the four panels shows the reconstruction error on a logarithmic scale over the data noise level $\delta$ on a double logarithmic scale 
for one kind of exact solution. For each line synthetic 
data were generated for 18 values of the sample size $N$ between 550 and 92500. 
The measurement radius $R$, the wave number $\kappa$, and the regularization norm are indicated 
by line type, marker type, and color, respectively. }
\label{fig:ErrorPlots}
\end{figure}

%% file: SurfacePlot.tex
\begin{figure}[thb]
    \centering
    \includegraphics{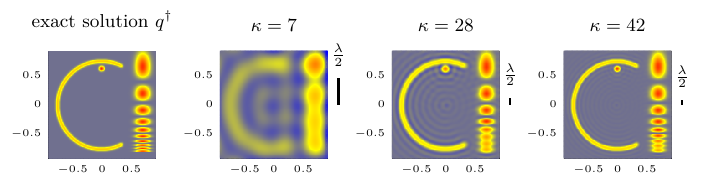}
    \caption{The reconstruction of a flat ground truth given by a unfinished circle with a dot very close at the bottom and bubbles of decreasing size on the left for different wave number $\kappa$}
    \label{fig:resultionKappa}
    \end{figure}    

%% file: SurfaceRate.tex
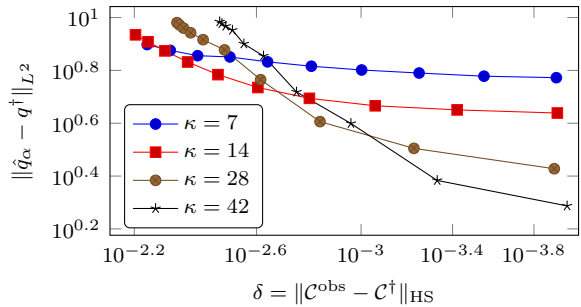
\begin{figure}[!hbt]
    \centering
    \begin{tikzpicture}[]
        \begin{loglogaxis}[
        width = 0.6\textwidth,
        height= 0.22\textheight,
        yshift = 0.00\textheight,
        xshift = 0.00\textwidth,
        xmin=4.9,
        xmax=9.3,
        ymin=1.5,
        ymax=11,
        ytick distance=10^(0.2),
        xtick = {5.0656872 , 5.98672124, 6.90775528, 7.82878932, 8.74982335},
        xticklabels={$10^{-2.2}$,$10^{-2.6}$,$10^{-3}$,$10^{-3.4}$,$10^{-3.8}$
        },
        ticklabel style={font=\footnotesize},
        label style={font=\footnotesize},
        xlabel = {$\delta=\Vert\COp^{\obs}-\COp^\dagger\Vert_{\HS}$},
        ylabel = {$\Vert\hat{q}_\alpha-q^\dagger\Vert_{L^2}$},
        legend pos=south west,
        ]
        \addplot+ coordinates{
            (5.1556560529, 7.8988940552)
			(5.3210353294, 7.5095433648)
			(5.5239764137, 7.1699106716)
			(5.7730105405, 7.0921807981)
			(6.0786066031, 6.7957702575)
			(6.4536112447, 6.5408986769)
			(6.9137889054, 6.3307204612)
			(7.4784845277, 6.1645788546)
			(8.1714367787, 5.9970070078)
			(9.0217759759, 5.9195410225)
        };
        \addlegendentry{$\kappa=7$}
        \addplot+ coordinates{
            (5.0739027776, 8.6013470703)
			(5.1614894913, 8.0956976691)
			(5.2823928127, 7.4781358132)
			(5.4492858551, 6.7878773760)
			(5.6796623848, 6.0792519385)
			(5.9976705212, 5.4341936878)
			(6.4366439488, 4.9444039949)
			(7.0425959607, 4.6340903015)
			(7.8790425832, 4.4711444435)
			(9.0336603390, 4.3505719221)
        };
        \addlegendentry{$\kappa=14$}
        \addplot+ coordinates{
            (5.3698554409, 9.5610344304)
			(5.3872555306, 9.3859037278)
			(5.4178044554, 9.1213291999)
			(5.4714384785, 8.7545286078)
			(5.5656024612, 8.2425510873)
			(5.7309239331, 7.5411224142)
			(6.0211749217, 5.8155174572)
			(6.5307617010, 4.0361150246)
			(7.4254311278, 3.1972894243)
			(8.9961810290, 2.6773706737)
        };
        \addlegendentry{$\kappa=28$}
        \addplot+ coordinates{
            (5.6931483902, 9.6322858225)
			(5.7103421545, 9.4880666315)
			(5.7403747723, 9.2769902214)
			(5.7928332172, 8.9546157194)
			(5.8844632062, 7.9487074472)
			(6.0445147459, 7.1517704335)
			(6.3240792821, 5.2258928593)
			(6.8123990448, 3.9730581334)
			(7.6653548378, 2.4181116557)
			(9.1552261073, 1.9369881569)
        };
        \addlegendentry{$\kappa=42$}
        \end{loglogaxis}
    \end{tikzpicture}
    \caption{Rates of the relative error in the $L^2$ norm for the surface source for different wave numbers.}
    \label{fig:RatesSurface}
\end{figure}

%% file: 06_Conclusion.tex
\section{Conclusion}\label{sec:conclusions}
In this work, we derived conditional stability estimates and convergence rates for spectral regularization methods applied to random inverse source problems under Sobolev-type smoothness assumptions on the unknown source strength.
For a fixed wave number, the resulting estimates are logarithmic in the noise level. Their explicit dependence on the wave number, however, yields Hölder-type convergence rates in suitable joint limits as the wave number tends to infinity while the noise level vanishes.

Numerical experiments corroborate the predicted asymptotically logarithmic convergence behavior with respect to the noise level. The observed exponents increase with the Sobolev smoothness index and appear to be slightly larger than the corresponding theoretical values.  
Moreover, in agreement with the theoretical analysis, the numerical results demonstrate that increasing the wave number shifts the onset of the asymptotic convergence regime to progressively smaller noise levels, leading to substantially improved reconstruction quality for practically relevant noise levels.

The present study opens several directions for future research, including a statistical convergence analysis that accounts for the Wishart distribution of correlation data with a specific covariance structure, stability results for the identification of unknown coefficients in partial differential equations from correlation data, and stability analyses for partial measurement data (e.g., open surfaces) or for particular classes of correlated sources.

\section*{Acknowledgments}
We would like to express our sincere gratitude to an anonymous referee for the thoughtful and constructive feedback. The detailed comments were highly valuable in improving the paper in several ways.
\bibliographystyle{siamplain}
\bibliography{refs}

%% file: refs.bib
@Book{GN:15,
  Title                    = {Mathematical foundations of infinite-dimensional statistical models},
  Author                   = {Gin{\'e}, Evarist and Nickl, Richard},
  Publisher                = {Cambridge University Press},
  Year                     = {2015},
  Volume                   = {40}
}

@Book{GT:77,
  Title                    = {Elliptic Partial Differential Equations of Second Order},
  Author                   = {David Gilbarg and Neil S Trudinger},
  Publisher                = {Springer},
  Year                     = {1977}
}

@Article{NHZ:20,
  author    = {Niu, Pingping and Helin, Tapio and Zhang, Zhidong},
  title     = {An inverse random source problem in a stochastic fractional diffusion equation},
  doi       = {10.1088/1361-6420/ab532c},
  issn      = {1361-6420},
  number    = {4},
  pages     = {045002},
  volume    = {36},
  journal   = {Inverse Problems},
  month     = feb,
  publisher = {IOP Publishing},
  year      = {2020},
}

@Article{TLSK:24,
  author    = {Triki, Faouzi and Linder-Steinlein, Kristoffer and Karamehmedovic, Mirza},
  title     = {Fourier method for inverse source problem using correlation of passive measurements},
  doi       = {10.1088/1361-6420/ad6fc7},
  issn      = {1361-6420},
  number    = {10},
  pages     = {105009},
  volume    = {40},
  journal   = {Inverse Problems},
  month     = sep,
  publisher = {IOP Publishing},
  year      = {2024},
}

@article{Artman:06,
  title = {Imaging passive seismic data},
  volume = {71},
  ISSN = {0016-8033},
  DOI = {10.1190/1.2209748},
  number = {4},
  journal = {Geophysics},
  publisher = {Society of Exploration Geophysicists},
  author = {Artman,  Brad},
  year = {2006},
  month = jan,
  pages = {SI177–SI187}
}

@Article{feizmohammadi:25,
  author    = {Feizmohammadi, Ali},
  title     = {Reconstruction of 1D Evolution Equations and their Initial Data from One Passive Measurement},
  doi       = {10.1137/25m1728533},
  issn      = {1095-7154},
  number    = {5},
  pages     = {5089–5106},
  volume    = {57},
  journal   = {SIAM Journal on Mathematical Analysis},
  month     = sep,
  publisher = {Society for Industrial & Applied Mathematics (SIAM)},
  year      = {2025},
}

@Article{BSS:08,
  author    = {Burov, V. A. and Sergeev, S. N. and Shurup, A. S.},
  title     = {The use of low-frequency noise in passive tomography of the ocean},
  doi       = {10.1134/s1063771008010077},
  issn      = {1562-6865},
  number    = {1},
  pages     = {42–51},
  volume    = {54},
  journal   = {Acoustical Physics},
  month     = jan,
  publisher = {Pleiades Publishing Ltd},
  year      = {2008},
}

@PhdThesis{Mickan:25,
  author      = {Mickan, Philipp Ronald},
  school = {University Goettingen},
  title       = {Conditional Stability and Instability Estimates for Inverse Random Source Problems},
  doi         = {10.53846/goediss-11736},
  year        = 2025
}

@Article{KRS:21,
  author     = {Koch, Herbert and R\"uland, Angkana and Salo, Mikko},
  title      = {On instability mechanisms for inverse problems},
  issn       = {2769-8505},
  pages      = {Paper No. 7, 93},
  fjournal   = {Ars Inveniendi Analytica},
  journal    = {Ars Inven. Anal.},
  mrclass    = {35R30 (49N45 65J22 65N21)},
  mrnumber   = {4462475},
  mrreviewer = {Yuchan\ Wang},
  year       = {2021},
  doi = {10.15781/5z5b3583 }
}

@article{Hohage2020,
author = {Thorsten Hohage and Hans-Georg Raumer and Carsten Spehr},
title = {Uniqueness of an inverse source problem in experimental aeroacoustics},
journal = {Inverse Problems},
year = {2020},
publisher = {{IOP} Publishing},
doi = {10.1088/1361-6420/ab8484},
volume = {36},
number = {7},
eid = {075012},
}

@Article{HL:25,
  author    = {Hohage, Thorsten and Liu, Meng},
  title     = {Passive Inverse Obstacle Scattering Problems for the {H}elmholtz Equation},
  doi       = {10.1137/24m1701812},
  issn      = {1095-712X},
  number    = {6},
  pages     = {2486–-2507},
  volume    = {85},
  journal   = {SIAM Journal on Applied Mathematics},
  month     = nov,
  publisher = {Society for Industrial & Applied Mathematics (SIAM)},
  year      = {2025},
}

@Book{ColtonKressIAaES,
author = {David Colton and Rainer Kress},
title = {Inverse Acoustic and Electromagnetic Scattering Theory},
publisher = {Springer Science+Business Media},
year = {2013},
edition = {3},
pages = {518},
doi = {10.1007/978-3-030-30351-8}
}

@article{HohageWeidling2015,
	year = 2015,
	publisher = {{IOP} Publishing},
	volume = {31},
	number = {7},
	eid = {075006},
	author = {Thorsten Hohage and Frederic Weidling},
	title = {Verification of a variational source condition for acoustic inverse medium scattering problems},
	journal = {Inverse Problems},
  doi = {10.1088/0266-5611/31/7/075006},
}

@article{Hofmann_2007,
  title = {A convergence rates result for {T}ikhonov regularization in {B}anach spaces with non-smooth operators},
  author = {B Hofmann and B Kaltenbacher and C Pöschl and O Scherzer},
  journal = {Inverse Problems},
  doi = {10.1088/0266-5611/23/3/009},
  number = {3},
  volume = {23},
  eid = {987},
  year = {2007},
  month = {4},
}

@article{hohage2017,
  title={Characterizations of variational source conditions, converse results, and maxisets of spectral regularization methods},
  author={Thorsten Hohage and Frederic Weidling},
  doi = {10.1137/16M1067445},
  journal={SIAM Journal on Numerical Analysis},
  volume={55},
  number={2},
  pages={598--620},
  year={2017},
  publisher={SIAM}
}

@article{mandache2001,
  title={Exponential instability in an inverse problem for the {S}chr{\"o}dinger equation},
  author={Niculae Mandache},
  doi = {10.1088/0266-5611/17/5/313},
  journal={Inverse Problems},
  volume={17},
  number={5},
  eid={1435},
  year={2001},
  publisher={IOP Publishing}
}

@article{dichristo2003,
  title={Examples of exponential instability for inverse inclusion and scattering problems},
  author={Michele Di Cristo and Luca Rondi},
  doi = {10.1088/0266-5611/19/3/313},
  journal={Inverse Problems},
  volume={19},
  number={3},
  eid={685},
  year={2003},
  publisher={IOP Publishing}
}

@article{Devaney1979,
author = {Anthony J Devaney},
title = {The inverse problem for random sources},
doi = {10.1063/1.524277},
journal = {Journal of Mathematical Physics},
volume = {20},
number = {8},
pages = {1687--1691},
year = {1979}
}

@article{PLi2017a,
  author = {Peijun Li and Ganghua Yuan},
  title = {Stability on the inverse random source scattering problem for the one-dimensional {H}elmholtz equation},
  doi = {10.1016/j.jmaa.2017.01.074},
  journal = {Journal of Mathematical Analysis and Applications},
  volume = {450},
  number = {2},
  pages = {872--887},
  year = {2017},
}

@article{YZhao2019,
  author = {Yue Zhao and Peijun Li},
  title = {Stability on the one-dimensional inverse source scattering problem in a two-layered medium},
  doi = {10.1080/00036811.2017.1399365},
  journal = {Applicable Analysis},
  volume = {98},
  number = {4},
  pages = {682--692},
  year  = {2019},
  publisher = {Taylor & Francis},
}

@article{JLi2020,
  author = {Jianliang Li and Tapio Helin and Peijun Li},
  title = {Inverse random source problems for time-harmonic acoustic and elastic waves},
  doi = {10.1080/03605302.2020.1774895},
  journal = {Communications in Partial Differential Equations},
  volume = {45},
  number = {10},
  pages = {1335--1380},
  year  = {2020},
  publisher = {Taylor & Francis},
}

@article{PLi2021c,
  author = {Li, Peijun and Wang, Xu},
  title = {Inverse Random Source Scattering for the {H}elmholtz Equation with Attenuation},
  doi = {10.1137/19M1309456},
  journal = {SIAM Journal on Applied Mathematics},
  volume = {81},
  number = {2},
  pages = {485--506},
  year = {2021},
}

@article{Cheng2016,
  title = {Increasing stability in the inverse source problem with many frequencies},
  author = {Jin Cheng and Victor Isakov and Shuai Lu},
  doi = {https://doi.org/10.1016/j.jde.2015.11.030},
  journal = {Journal of Differential Equations},
  volume = {260},
  number = {5},
  pages = {4786--4804},
  year = {2016},
}

@article{Lassas2008,
  title = {Inverse Scattering Problem for a Two Dimensional Random Potential},
  author = {Matti Lassas and Lassi Päivärinta and Eero Saksman},
  doi = {10.1007/s00220-008-0416-6},
  journal = {Communications in Mathematical Physics},
  volume = {279},
  number = {3},
  pages = {669--703},
  year = {2008},
}

@book{ReedSimon1980,
  title={I: Functional Ananlysis},
  author={Michael Reed and Barry Simon},
  series={Methods of Modern Mathematical Physics},
  year={1980},
  publisher={Academic Press, Inc. (London) Ltd.},
  pages = {416},
  edition = {1},
  volume = {1},
}

@misc{Faddeev1965,
  author = {L. D. Faddeev},
  title = {Increasing solutions of the {S}chr\"odinger equation},
  journal = {Dokl. Akad. Nauk SSSR},
  year = {1965},
  volume = {165},
  number = {3},
  pages = {514--517},
  url = {http://mi.mathnet.ru/dan31838}
}

@misc{Haehner1998,
  author = {Peter Hähner},
  title = {On Acoustic, Electromagnetic,
  and Elastic Scattering Problems
  in Inhomogeneous Media},
  year = {1998},
  pages = {275},
  note = {Habilitation Thesis},
  publisher={Univeristät Göttingen},
  url = {https://num.math.uni-goettingen.de/picap/pdf/E577.pdf}
}

@book{Bleistein2001,
  author = {N. Bleistein and J. W. Stockwell and J. K. Cohen},
  title = {Mathematics of Multidimensional Seismic Imaging, Migration, and Inversion},
  publisher = {Springer New York, NY},
  doi = {10.1007/978-1-4613-0001-4},
  edition = {1},
  pages = {210},
  year = {2001}
}

@article{Kaltenbacher2018,
  author = {Kaltenbacher, Manfred and Kaltenbacher, Barbara and Gombots, Stefan},
  title = {Inverse Scheme for Acoustic Source Localization using Microphone Measurements and Finite Element Simulations},
  doi = {10.3813/AAA.919204},
  journal = {Acta Acustica united with Acustica},
  year = {2018},
  month = {07},
  pages = {647--656},
  volume = {104},
}

@article{RausHaemarik2007,
  title = {On the quasioptimal regularization parameter choices for solving ill-posed problems},
  author = {T. Raus and U. Hämarik},
  doi = {10.1515/jiip.2007.023},
  journal = {Journal of Inverse and Ill-posed Problems},
  pages = {419--439},
  volume = {15},
  number = {4},
  year = {2007},
}

@article{HELIN2018132,
  title = {Correlation based passive imaging with a white noise source},
  author = {T. Helin and M. Lassas and L. Oksanen and T. Saksala},
  doi = {10.1016/j.matpur.2018.05.001},
  journal = {Journal de Mathématiques Pures et Appliquées},
  volume = {116},
  pages = {132--160},
  year = {2018},
}

@article{Alessandrini1988StableDO,
  title={Stable determination of conductivity by boundary measurements},
  author={Giovanni Alessandrini},
  doi = {10.1080/00036818808839730},
  journal={Applicable Analysis},
  year={1988},
  volume={27},
  number = {1--3},
  pages={153--172},
}

@article{li2023stability,
  author = {Li, Peijun and Liang, Ying},
  title = {Stability for Inverse Source Problems of the Stochastic {H}elmholtz Equation with a White Noise},
  journal = {SIAM Journal on Applied Mathematics},
  volume = {84},
  number = {2},
  pages = {687--709},
  year = {2024},
  doi = {10.1137/23M1586331},
}

@article{lindsey2000basic,
  title={Basic principles of solar acoustic holography--(invited review)},
  author={Lindsey, C and Braun, DC},
  doi = {10.1007/978-94-011-4377-6_17},
  journal={Solar Physics},
  volume={192},
  number={1},
  pages={261--284},
  year={2000},
  publisher={Springer}
}

@misc{Uhlmann2008,
  url = {http://www.jstor.org/stable/40233793},
  author = {Gunther Uhlmann and Jenn-Nan Wang},
  title = {Reconstructing Discontinuities Using Complex Geometrical Optics Solutions},
  journal = {SIAM Journal on Applied Mathematics},
  number = {4},
  pages = {1026--1044},
  publisher = {Society for Industrial and Applied Mathematics},
  volume = {68},
  year = {2008}
}

@article{Agaltsov_2020,
  author = {A D Agaltsov and T Hohage and R G Novikov},
  title = {Global uniqueness in a passive inverse problem of helioseismology},
  doi = {10.1088/1361-6420/ab77d9},
  journal = {Inverse Problems},
  year = {2020},
  month = {4},
  publisher = {IOP Publishing},
  volume = {36},
  number = {5},
  eid = {055004},
}

@book{garnierPassiveImagingAmbient2016,
	location = {Cambridge},
	title = {Passive Imaging with Ambient Noise},
	isbn = {978-1-107-13563-5},
	publisher = {Cambridge University Press},
	author = {Garnier, Josselin and Papanicolaou, George},
	year = {2016},
	doi = {10.1017/CBO9781316471807},
}

@article{mullerQuantitativePassiveImaging2024,
	title = {Quantitative passive imaging by iterative holography: the example of helioseismic holography},
	volume = {40},
	doi = {10.1088/1361-6420/ad2b9a},
	pages = {045016},
	number = {4},
	journal = {Inverse Problems},
	author = {Müller, Björn and Hohage, Thorsten and Fournier, Damien and Gizon, Laurent},
	year = {2024},
	note = {Publisher: {IOP} Publishing},
}

@article{chandler-wilde_wave-number-explicit_2008,
	title = {Wave-{Number}-{Explicit} {Bounds} in {Time}-{Harmonic} {Scattering}},
	volume = {39},
	issn = {0036-1410},
	url = {https://doi.org/10.1137/060662575},
	doi = {10.1137/060662575},
	number = {5},
	urldate = {2026-08-27},
	journal = {SIAM Journal on Mathematical Analysis},
	publisher = {Society for Industrial and Applied Mathematics},
	author = {Chandler-Wilde, Simon N. and Monk, Peter},
	month = jan,
	year = {2008},
	pages = {1428--1455},
}
